 \newtheorem{remark}{Remark}
 \newtheorem{lemma}[remark]{Lemma}
 \newtheorem{theorem}[remark]{Theorem}
 \newtheorem{proposition}[remark]{Proposition}
 \newtheorem{corollary}[remark]{Corollary}
\title{Analogies between the geodetic number and the Steiner number of some classes of graphs}
\author{ Ismael G. Yero$^{1}$ and Juan A.
Rodr\'{\i}guez-Vel\'{a}zquez$^{2}$\\
    \\
$^1${\small Departamento de Matem\'aticas, Escuela Polit\'ecnica Superior de Algeciras}\\
{\small Universidad de C\'adiz,} {\small
Av. Ram\'on Puyol s/n, 11202 Algeciras, Spain.} \\ {\small
ismael.gonzalez\@@uca.es}\\
$^2${\small Departament d'Enginyeria Inform\`atica i Matem\`atiques}\\
{\small Universitat Rovira i Virgili,}  {\small Av. Pa\"{\i}sos
Catalans 26, 43007 Tarragona, Spain.} \\{\small
juanalberto.rodriguez\@@urv.cat}
\\
}
\date{}
\begin{document}

\maketitle

\begin{abstract}
A set of vertices $S$ of a graph $G$ is a geodetic set of $G$ if
every vertex $v\not\in S$ lies on a shortest path between two
vertices of $S$.
The minimum cardinality of a geodetic set of $G$ is the geodetic
number of $G$ and it is denoted by $g(G)$. A Steiner set of $G$ is a set of vertices $W$ of
$G$ such that every vertex of $G$ belongs to the set of vertices of
a connected subgraph of minimum size containing the vertices of $W$. The minimum cardinality of a Steiner set of $G$ is the Steiner
number of $G$ and it is denoted by $s(G)$. Let $G$ and $H$ be two graphs and let  $n$ be the order of $G$.  The corona product $G\odot H$ is defined as the graph obtained from $G$ and $H$ by taking one copy of $G$ and $n$ copies of $H$ and  joining by an edge each vertex from the $i^{th}$-copy of $H$ with the $i^{th}$-vertex of $G$. We study the geodetic number and the Steiner number of corona product graphs.
We show that if  $G$ is a connected graph of order $n\ge 2$ and  $H$ is a
non complete graph, then $g(G\odot H)\le s(G\odot H)$, which partially solve the open problem presented in [\emph{Discrete Mathematics} \textbf{280} (2004)  259--263] related to characterize families of graphs $G$ satisfying that $g(G)\le s(G)$.
\end{abstract}

{\it Keywords:} Geodetic sets, Steiner sets, corona graph.

{\it AMS Subject Classification Numbers:}   05C69; 05C12; 05C76.

\section{Introduction}

The Steiner distance of a set of vertices of a graph was introduced
as a generalization of the distance between two vertices
\cite{chartrand-first}. In this sense, Steiner sets in graphs could
be understood as a generalization of geodetic sets in graphs.
Nevertheless, its relationship is not exactly obvious. Some of the
primary results in this topic were presented in
\cite{chartrand-zhang}, where the authors tried to obtain a result
relating geodetic sets and Steiner sets. That is, they tried to show
that every Steiner set of a graph is also a geodetic set.
Fortunately, the author of \cite{pelayo-comment} showed by a
counterexample that not every Steiner set of a graph  is a
geodetic set,  and it was pointed out an open question related
to characterizing those graphs satisfying that every Steiner set is
geodetic or vice versa. Some relationships between Steiner sets and
geodetic sets  were obtained in
\cite{pelayo-1,chartrand-zhang,li-da-tong,SteinerGeodetic,pelayo-comment}.
For instance, \cite{pelayo-1} was dedicated to obtain some families
of graphs in which every Steiner set is a geodetic set, but the
problem of characterizing such a graphs remains open.

In this work we show some classes of graph in which every Steiner
set is a geodetic set. For instance, we prove that if $G$ is a graph
with diameter two, then every Steiner set of $G$ is also a geodetic
set. We also obtain some relationships between the Steiner (geodetic)
sets of corona product graphs and the Steiner (geodetic) sets of its
factors and, as a consequence of this study, we obtain that if  $G$ is a corona product graph, then  every Steiner set of $G$ is a
geodetic set.

We begin by stating some terminology and notation. In this paper $G=
(V,E)$ denotes a connected simple graph of order $n=|V|$.  We denote two adjacent vertices $u$ and $v$ by $u\sim v$. Given
a set $W\subset V$ and a vertex $v\in V$, $N_W(v)$ represents the
set of neighbors that $v$ has in $W$, i.e. $N_W(v)=\{u\in
W\,:\,u\sim v\}$. The subgraph induced by a set $W\subset V$ will be denoted by $\langle W\rangle$.

The distance $d_G(u,v)$ between two vertices $u$ and
$v$ is the length of a shortest $u-v$ path in $G$. If there is no ambiguity,  we will use the notation $d(u,v)$ instead of $d_G(u,v)$.
A shortest $u-v$ path is called  $u-v$ \textit{geodesic}.
We define $I_G[u,v]$\footnote{If there is no ambiguity, then we will
use $I[u,v]$.} to be the set of all vertices lying on some $u-v$
geodesic of $G$, and for a nonempty set $S\subseteq V$,
$I_G[S]=\bigcup_{u,v\in S}I_G[u,v]$ ($I[S]$ for short). A set
$S\subseteq V$ is a \textit{geodetic set} of $G$ if
$I_G[S]=V$ and a geodetic set of minimum cardinality is called a
\textit{minimum geodetic set} \cite{HaLoTs}. The cardinality of a minimum geodetic set of $G$ is called the \textit{geodetic number} of $G$ and it is
denoted by $g(G)$. A vertex $v\in V$ is \emph{geodominated} by a pair
$x,y\in V$ if $v$ lies on  an $x-y$ geodesic of $G$. For an integer
$k\ge 2$, a vertex $v$ of a graph $G$ is $k$-\emph{geodominated} by a pair
$x,y$ of vertices in $G$ if $d(x,y)=k$ and $v$ lies on an $x-y$
geodesic of $G$. A subset $S\subseteq V$ is a \textit{$k$-geodetic
set} if each vertex $v$ in $\overline{S}=V - S$ is
$k$-geodominated by some pair of vertices of $S$. The minimum
cardinality of a $k$-geodetic set of $G$ is its \textit{$k$-geodetic
number} $g_k(G)$. It is clear that $g(G)\leq g_k(G)$ for every $k$.

For a nonempty set $W$ of vertices of a connected graph, the \emph{Steiner
distance} $d(W)$ of $W$ is the minimum size of a connected subgraph
of $G$ containing $W$ \cite{chartrand-first}. Necessarily, such a
subgraph is a tree and it is called a \emph{Steiner tree} with respect to
$W$ or a Steiner $W$-tree, for short. For a set $W\subseteq V$, the
set of all vertices of $G$ lying on some Steiner $W$-tree is denoted
by $S_G[W]$ (or by $S[W]$, if there is no ambiguity). If $S_G[W]=V$,
then $W$ is called a Steiner set of $G$. The \emph{Steiner number} of a graph $G$,
denoted by $s(G)$, is the minimum cardinality among the Steiner sets of $G$.

Let $G$ and $H$ be two graphs and let $n$ be the order of $G$.
The corona product $G\odot H$ is defined as the graph obtained from
$G$ and $H$ by taking one copy of $G$ and $n$ copies of $H$ and
then joining by an edge, all the vertices from the $i^{th}$-copy of
$H$ with the $i^{th}$-vertex of $G$. Throughout the article we will
denote  by $V=\{v_1,v_2,...,v_{n}\}$   the set of vertices of $G$ and by $H_i=(V_i,E_i)$   the copy of $H$ in $G\odot H$ such that $v_i\sim v$ for every $v\in V_i$.

\section{Geodetic number of corona product graphs}

We begin by stating some results that we will use as tool in this section. The first one is the following well-known result.

\begin{lemma}{\em \cite{HaLoTs}}\label{geo-kn}
Let $G$ be a connected graph of order $n$. Then $g(G)=n$ if and only
if $G\cong K_n$.
\end{lemma}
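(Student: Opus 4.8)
The plan is to prove the two implications separately, in each case exploiting the fact that geodesics in a complete graph are trivial. For the \emph{if} direction, suppose $G\cong K_n$. Then $d(u,v)=1$ for every pair of distinct vertices, so every geodesic has length at most one and $I[u,v]=\{u,v\}$ for all $u,v\in V$. Consequently, for any $S\subseteq V$ we have $I[S]=\bigcup_{u,v\in S}I[u,v]=S$, so the condition $I[S]=V$ forces $S=V$. Hence the only geodetic set is $V$ itself, and $g(G)=n$.

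For the \emph{only if} direction I would argue by contraposition: assuming $G\not\cong K_n$, I will exhibit a geodetic set of cardinality $n-1$, which yields $g(G)\le n-1<n$. Since $G$ is not complete there exist two nonadjacent vertices $u$ and $v$, and since $G$ is connected we have $d(u,v)\ge 2$; fix a $u$-$v$ geodesic, which therefore has length at least two, and let $w$ be an internal vertex of it. Then $w\ne u$, $w\ne v$, and $w\in I[u,v]$. Now set $S=V\setminus\{w\}$. Since $u,v\in S$ we get $w\in I[u,v]\subseteq I[S]$, and every other vertex already lies in $S\subseteq I[S]$; hence $I[S]=V$, so $S$ is a geodetic set of size $n-1$ and $g(G)\le n-1<n$.

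I do not anticipate any genuine obstacle here; the only point requiring a moment's care is the use of connectedness of $G$, which guarantees that a nonadjacent pair $u,v$ is still joined by a path (in fact a geodesic) of length at least two, so that the internal vertex $w$ used in the construction of $S$ actually exists. (The case $n=1$ is trivial, as $G\cong K_1$ and $g(G)=1=n$.)
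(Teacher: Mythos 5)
Your proof is correct. The paper does not prove this lemma at all — it is quoted from Harary, Loukakis and Tsouros \cite{HaLoTs} as a known result — but your two-direction argument (in $K_n$ every geodesic is a single edge so $I[S]=S$ forces $S=V$; if $G$ is incomplete and connected, an internal vertex $w$ of a geodesic between two nonadjacent vertices gives the geodetic set $V\setminus\{w\}$ of size $n-1$) is exactly the standard proof of that cited result, and it is complete.
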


Our second tool will be the following useful lemma related to the geodetic
sets of corona product graphs.

\begin{lemma}\label{lemma-principal-geo}
Let $G=(V,E)$ be a connected graph of order $n$ and let $H$ be a
graph. Let $H_1=(V_1,E_1),H_2=(V_2,E_2),...,H_{n}=(V_{n},E_{n})$ be the
$n$ copies of $H$ in $G\odot H$.
\begin{itemize}
\item[{\rm (i)}] Given three different vertices $a$, $b$ and $v$ of $G\odot H$, if $v\in V_i$ and $(a\notin V_i$ or $b\notin V_i)$, then $v\notin I_{G\odot H}[a,b]$.
\item[{\rm (ii)}] If $W$ is a geodetic set of $G\odot H$, then $W\cap V_i\ne \emptyset$, for every $i\in \{1,...,n\}$.
\item[{\rm (iii)}] If $W$ is a minimum geodetic set of $G\odot H$ and either $n\ge 2$ or $(n=1$ and $H$ is a non-complete graph$)$,
then $W\cap V=\emptyset$.
\item[{\rm (iv)}] If $H$ is a non-complete graph and $W$ is a minimum geodetic set of $G\odot H$, then for every
$i\in \{1,...,n\}, $ $W_i=W\cap V_i$ is a geodetic set of
$\langle v_i\rangle \odot H_i$.
\end{itemize}
\end{lemma}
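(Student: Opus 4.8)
The plan is to reduce everything to a few elementary distance computations inside $G\odot H$, together with the observation that, for $n\ge 2$, each $v_i$ is a cut vertex of $G\odot H$ whose removal separates $V_i$ from the rest. Concretely, writing $v_i$ for the $i$-th vertex of $G$ and $V_i=V(H_i)$, I would first record: $d(v,v_i)=1$ for every $v\in V_i$; for $v,w\in V_i$, $d(v,w)=\min\{2,d_H(v,w)\}$, where a $v$–$w$ geodesic of length $2$ is either $v\,v_i\,w$ or $v\,c\,w$ with $c$ a common neighbour of $v,w$ in $H_i$; for $v\in V_i$ and $u\in V_j$ with $i\ne j$, every $v$–$u$ geodesic has the form $v\,v_i\cdots v_j\,u$, so $d(v,u)=2+d_G(v_i,v_j)$; and $d(v,v_j)=1+d_G(v_i,v_j)$ for $v\in V_i$, $j\ne i$. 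All of these follow directly from the corona construction.

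For (i), suppose $v\in V_i$ lies strictly inside an $a$–$b$ geodesic $P$ (so $v\ne a,b$) while, say, $a\notin V_i$. The edge of $P$ entering $v$ must be $v_iv$: the only neighbour $x$ of $v$ with $d(a,x)=d(a,v)-1$ is $v_i$, since every $x\in N_H(v)\subseteq V_i$ has $d(a,x)=d(a,v)$. So $v_i$ precedes $v$ on $P$, and the continuation of $P$ past $v$ begins with an edge $vw$, $w\in V_i$; then $b\notin V_i$ is impossible (a path cannot revisit the cut vertex $v_i$) and $b\in V_i$ is impossible too (comparing the length of $P$, which is at least $d(a,v_i)+2$, with $d(a,b)\le d(a,v_i)+1$). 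The case $a=v_i$ is handled identically. Part (ii) is then immediate: if $W\cap V_i=\emptyset$, then by (i) no vertex of the nonempty set $V_i$ is geodominated by a pair from $W$, and none lies in $W$, contradicting $I[W]=V(G\odot H)$.

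Part (iii) is the one I expect to be the main obstacle. Assume $v_i\in W$ and put $W'=W\setminus\{v_i\}$; I would show $W'$ is still geodetic, contradicting minimality. By (i), every vertex of $V_k\setminus W'$ is geodominated by a pair lying entirely in $V_k$, hence in $W'$, so only vertices of $V$ require checking. If $n\ge 2$, choose $a\in W\cap V_i$ (nonempty by (ii)) and $b\in W\cap V_j\subseteq W'$ for some $j\ne i$; then $a\,v_i\cdots v_j\,b$ is a geodesic, so $v_i\in I[W']$, and any $v_k$ that was geodominated via $v_i$ — i.e. $v_k$ lay on a $v_i$–$b$ geodesic with $b\in W$ — is recovered by prepending the edge $av_i$, which lengthens that geodesic by exactly one. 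If $n=1$, then $W=(W\cap V_1)\cup\{v_1\}$, and $\langle W\cap V_1\rangle$ cannot be complete in $H$: otherwise, arguing as in (i)–(ii), no vertex of $V_1\setminus(W\cap V_1)$ could be geodominated, forcing $W\cap V_1=V_1$ and hence $H$ complete, contrary to hypothesis; a nonadjacent pair $a\not\sim b$ in $W\cap V_1$ geodominates $v_1$ through $a\,v_1\,b$, so $W'=W\cap V_1$ is again geodetic. (This is exactly where non-completeness of $H$ is needed when $n=1$: for $K_1\odot K_m$ one has $g=m+1$ and the whole vertex set, including $v_1$, is forced.)

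Finally, for (iv): since $H$ is non-complete, (iii) applies and gives $W\cap V=\emptyset$, so $W=\bigcup_i W_i$ with $\emptyset\ne W_i=W\cap V_i\subseteq V_i$ (nonempty by (ii)); note $\langle v_i\rangle\odot H_i\cong K_1\odot H$. A vertex $w\in V_i\setminus W_i$ is geodominated in $G\odot H$, by (i), by a pair $a,b\in W_i$; the distance facts force $a\not\sim_H b$ and $w\in N_{H_i}(a)\cap N_{H_i}(b)$, so $a\,w\,b$ is also a geodesic of $\langle v_i\rangle\odot H_i$. In particular $W_i$ is not a clique of $H$ (otherwise no such $w$ exists, so $W_i=V_i$ and $H$ is complete), hence some pair $a\not\sim_H b$ lies in $W_i$ and $a\,v_i\,b$ geodominates $v_i$. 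Thus $I_{\langle v_i\rangle\odot H_i}[W_i]=\{v_i\}\cup V_i$, i.e. $W_i$ is a geodetic set of $\langle v_i\rangle\odot H_i$, completing the argument.
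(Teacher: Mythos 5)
Your proof is correct and follows essentially the same route as the paper: (i) and (ii) from the cut-vertex structure of the corona, (iii) by deleting vertices of $V$ from $W$ and re-geodominating them from the copies $V_i$, and (iv) by transferring length-two geodesics between $G\odot H$ and $\langle v_i\rangle\odot H_i$. The only cosmetic differences are that you remove the vertices of $V\cap W$ one at a time rather than all at once, and in the $n=1$ case you argue directly that $\langle W\cap V_1\rangle$ cannot be complete instead of invoking the characterization $g(G)=n\Leftrightarrow G\cong K_n$.
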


\begin{proof} (i) and (ii) follow directly from the fact that the vertices belonging to  $V_i$ are adjacent to only one vertex not in $V_i$.

Now let $W'$ be a geodetic set of  $G\odot H$ and let $W=W'-V$. We will show that $W$ is a geodetic set of
$G\odot H$.  By (ii) we have that for every $i\in
\{1,...,n\}$ it is satisfied, $W\cap V_i\ne \emptyset$, and by
(i), we have that if $v\in V_i$, then  there exist $a',b'\in V_i\cap W$,
such that $v\in I_{G\odot H}[a',b']$. Now, if $n\ge 2$, then for
every vertex $v_i\in V$ we have that $v_i\in I_{G\odot H}[c,d]$,
with $c\in W\cap V_i$ and $d\in W\cap V_j$, $j\ne i$. Thus, $W$
is a geodetic set of $G\odot H$. On the other hand, if $n=1$ and
$H$ is a non-complete graph, then $G\odot H$ is a non-complete graph and, by Lemma \ref{geo-kn},  $g(G\odot H)\le n_2$, where $n_2$ is the order of $H$. Hence, $\langle W\rangle$ is
not isomorphic to a complete graph. So, there exist two vertices $x,y\in W$ such that the vertex of $G\cong K_1$ belongs to $
I_{G\odot H}[x,y]$. Moreover, by (i), every vertex of $\overline{W}$ different from the vertex of $G$ is
geodominated by two vertices of $W$.  Thus,  $W$ is a
geodetic set of $G\odot H$ and, as a consequence, (iii) follows.

Finally, let $H$ be a non-complete graph and let $W$ be   a minimum geodetic set of $G\odot H$. By (ii) we have that $W_i=W\cap V_i\ne \emptyset$.  Also, by (iii) we have that $V\cap W=\emptyset$. Now we suppose that $W_i$ is not a geodetic set of $\langle v_i\rangle \odot H_i$.
Hence, there exists $v\in V_i\cup \{v_i\}$ such that $v\notin
I_{\langle v_i\rangle \odot H_i}[x,y]$ for every $x,y\in W_i$. By (i) we have that if $v\in V_i-W$, then $v$ must be geodominated by vertices of $W_i$, which is a contradiction, so $v\not\in V_i$, i.e., $v=v_i$. Now, since
$v_i$ is adjacent to every vertex of $H_i$ and $H_i$ is a non-complete graph, we obtain that there exist two non-adjacent vertices $c,d$ of $H_i$ such that  $c,d\in  W_i$. Hence, $v_i\in I_{\langle v_i\rangle \odot H_i}[c,d]$, a contradiction. Therefore, (iv) follows.
\end{proof}

The following relation between $g(H)$ and $g(K_1\odot H)$, which we will
use here, was obtained in \cite{pelayo-1}.

\begin{lemma}{\em \cite{pelayo-1}}\label{geo-H-K1+H}
For any graph $H$, $g(K_1\odot H)\ge g(H)$.
\end{lemma}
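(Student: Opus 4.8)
The plan is to show that a minimum geodetic set of $K_1\odot H$ can be chosen inside the copy of $H$ and that such a set is automatically a geodetic set of $H$; the inequality then follows at once. Throughout, let $u$ denote the vertex of $K_1$, which in $K_1\odot H$ is adjacent to every vertex of $H$.

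First I would dispose of the case in which $H$ is a complete graph: then $K_1\odot H\cong K_{m+1}$, where $m$ is the order of $H$, so Lemma~\ref{geo-kn} gives $g(K_1\odot H)=m+1> m=g(H)$, and we are done. Hence from now on assume $H$ is not complete. Let $W$ be a minimum geodetic set of $K_1\odot H$, so that $|W|=g(K_1\odot H)$. Applying Lemma~\ref{lemma-principal-geo}(iii) with $G=K_1$ (thus $n=1$) and the non-complete graph $H$, we obtain $u\notin W$, that is, $W\subseteq V(H)$.

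The core of the argument is to check that $W$ is a geodetic set of $H$. Let $v\in V(H)\setminus W$. Since $W$ is a geodetic set of $K_1\odot H$, there are $x,y\in W$ with $v\in I_{K_1\odot H}[x,y]$. As $v\notin\{x,y\}$, a shortest $x-y$ path in $K_1\odot H$ has length at least $2$; but $K_1\odot H$ has diameter at most $2$ (any two of its vertices are either equal to $u$, adjacent to $u$, or joined through $u$), so $d_{K_1\odot H}(x,y)=2$ and every $x-y$ geodesic has the form $x-z-y$ for a common neighbour $z$. Since $v$ lies in the interior of one such geodesic, $v=z$; hence $v$ is adjacent to both $x$ and $y$ in $K_1\odot H$, while $x$ and $y$ are non-adjacent there. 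Now $x,y,v$ all lie in $V(H)$, and two vertices of $V(H)$ are adjacent in $K_1\odot H$ if and only if they are adjacent in $H$; therefore $x$ and $y$ are non-adjacent in $H$ and $x-v-y$ is a path of $H$, so $d_H(x,y)=2$ and $v\in I_H[x,y]\subseteq I_H[W]$. Together with $W\subseteq I_H[W]$ this yields $I_H[W]=V(H)$, so $W$ is a geodetic set of $H$ and $g(H)\le|W|=g(K_1\odot H)$.

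The only real obstacle is transferring the geodominating pair of $v$ from $K_1\odot H$ to $H$, that is, making sure the ambient distances in the corona do not shortcut geodesics of $H$ in an essential way. This is controlled by two facts: that $W$ avoids the apex $u$ (Lemma~\ref{lemma-principal-geo}(iii)), and that $K_1\odot H$ has diameter at most $2$, so the relevant geodesics have length exactly $2$ with an interior vertex lying in $V(H)$ and adjacent there to both endpoints; everything else is routine. If one prefers not to assume $H$ connected, the argument goes through unchanged once one observes that isolated vertices of $H$ are forced into every geodetic set of $H$ and of $K_1\odot H$ alike.
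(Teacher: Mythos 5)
Your proof is correct. Note that the paper does not actually prove this lemma: it is imported verbatim from \cite{pelayo-1}, so there is no in-paper argument to compare against. Your derivation is sound and self-contained modulo the tools the paper does supply: the complete case via Lemma~\ref{geo-kn}, the exclusion of the apex from a minimum geodetic set via Lemma~\ref{lemma-principal-geo}(iii) (which is proved independently of Lemma~\ref{geo-H-K1+H}, so there is no circularity), and the diameter-two observation forcing $d_{K_1\odot H}(x,y)=2$ with the interior vertex $v$ a common neighbour of $x$ and $y$ inside $V(H)$. In fact your argument establishes slightly more than the stated inequality --- it shows that a minimum geodetic set of $K_1\odot H$ is a $2$-geodetic set of $H$ --- which is precisely the reasoning the authors later deploy in the second half of their proof of Theorem~\ref{geodetic-iff-2-geo}; so your route is consistent with, and anticipates, the paper's own machinery rather than diverging from it.
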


A vertex $v$ is an extreme vertex in a graph
$G$ if the subgraph induced by its neighbors is complete.
The following lemma is a consequence of the observation
that each extreme vertex $v$ of $G$ is either the initial or
terminal vertex of a geodesic containing $v$.

\begin{lemma}{\em \cite{ChHaZh3}}\label{lemmaExtreme}  Every geodetic set of a graph contains its
extreme vertices.
\end{lemma}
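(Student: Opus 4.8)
The plan is to argue by contradiction: assume $S$ is a geodetic set of $G$ and that some extreme vertex $v$ of $G$ satisfies $v\notin S$. Since $I_G[S]=V$, there are vertices $x,y\in S$ such that $v$ lies on some $x-y$ geodesic $P$. Because $v\notin S$ while $x,y\in S$, we have $v\neq x$ and $v\neq y$; in particular $d(x,y)\ge 2$ and $v$ is an \emph{internal} vertex of $P$, so $v$ has a predecessor $a$ and a successor $b$ on $P$, and $a\neq b$ since $P$ is a path.

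The key step is a shortcut argument. Since $v$ is extreme, the subgraph induced by the neighbors of $v$ is complete; as $a$ and $b$ are both neighbors of $v$, the edge joining $a$ and $b$ is present in $G$. Replacing the length-two subpath $a\,v\,b$ of $P$ by this single edge produces an $x-y$ walk strictly shorter than $P$, and hence an $x-y$ path of length $<d(x,y)$, contradicting the fact that $P$ is a geodesic. Therefore $v\in S$; since $v$ was an arbitrary extreme vertex, $S$ contains all extreme vertices of $G$.

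The only points requiring care are the degenerate configurations, and these are handled at the outset. If the only pair of $S$ geodominating $v$ were of the form $x=y$, then $I_G[x,x]=\{x\}$ would force $v=x\in S$, a contradiction; and if $d(x,y)=1$, then $v\in\{x,y\}\subseteq S$, again a contradiction. Hence one may always assume $d(x,y)\ge 2$ with $v$ strictly between the endpoints of $P$, which is exactly the situation the shortcut argument needs. I do not expect a real obstacle here: the entire content is the remark, already made just before the statement, that an extreme vertex can only occur as an endpoint of a geodesic through it.
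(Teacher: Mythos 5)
Your proof is correct and is exactly the standard shortcut argument: an internal vertex of a geodesic has two consecutive neighbors on the path which, for an extreme vertex, must be adjacent, contradicting minimality. The paper does not prove the lemma (it cites it) but justifies it by precisely the observation you formalize, so your approach coincides with the intended one.
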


\begin{proposition}\label{general-bound-geo}
Let $G$ be a connected graph of order $n_1$ and let $H$ be a graph
of order $n_2$. If $n_1\ge 2$ or $(n_1=1$ and $H$ is a non-complete graph$)$, then $$n_1g(H)\le g(G\odot H)\le n_1n_2.$$
 The upper bound is achieved if and only if $H$ is isomorphic
to a graph in which every connected component is isomorphic to a
complete graph.

Moreover, if no connected component of $H$ is isomorphic to a
complete graph, then $$g(G\odot H)\le n_1(n_2-1).$$
\end{proposition}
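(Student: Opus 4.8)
The statement has four parts: the lower bound $n_1g(H)\le g(G\odot H)$, the upper bound $g(G\odot H)\le n_1n_2$, the characterization of when the upper bound is attained, and the refinement $g(G\odot H)\le n_1(n_2-1)$ when no connected component of $H$ is complete. The plan is to prove them in the order upper bound, lower bound, equality case, refinement, using Lemma~\ref{lemma-principal-geo} together with Lemmas~\ref{geo-kn}, \ref{geo-H-K1+H} and \ref{lemmaExtreme}.

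For the upper bound I would show directly that $S=\bigcup_{i=1}^{n_1}V_i$ is a geodetic set of $G\odot H$. Every vertex of $\bigcup_iV_i$ lies in $S$; if $n_1\ge2$ then a vertex $v_i\in V$ lies on any $a-b$ geodesic with $a\in V_i$ and $b\in V_j$, $j\ne i$ (since $v_i$ separates $V_i$ from the rest of the graph); and if $n_1=1$ (so $H$ is non-complete) then $v_1$ lies on the length-$2$ geodesic joining two non-adjacent vertices of $H_1$. Hence $g(G\odot H)\le|S|=n_1n_2$. (For $n_1=1$ one may alternatively invoke Lemma~\ref{geo-kn}, since $K_1\odot H\not\cong K_{n_2+1}$ when $H$ is non-complete.)

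For the lower bound I distinguish whether $H$ is complete. If $H$ is non-complete, take a minimum geodetic set $W$ of $G\odot H$; Lemma~\ref{lemma-principal-geo}(iii) gives $W\cap V=\emptyset$ and Lemma~\ref{lemma-principal-geo}(iv) gives that each $W_i=W\cap V_i$ is a geodetic set of $\langle v_i\rangle\odot H_i\cong K_1\odot H$, so $|W_i|\ge g(K_1\odot H)\ge g(H)$ by Lemma~\ref{geo-H-K1+H}; summing over the disjoint $V_i$ yields $g(G\odot H)=\sum_{i=1}^{n_1}|W_i|\ge n_1g(H)$. If $H$ is complete, then the hypothesis forces $n_1\ge2$, and every vertex $v$ of $\bigcup_iV_i$ is an extreme vertex of $G\odot H$, because its neighborhood is the clique $V_i\setminus\{v\}$ together with $v_i$, which is adjacent to all of it; by Lemma~\ref{lemmaExtreme} every geodetic set of $G\odot H$ contains $\bigcup_iV_i$, so $g(G\odot H)\ge n_1n_2=n_1g(H)$, which combined with the upper bound already settles the equality case when $H$ is complete.

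The remaining parts rest on the elementary observation that a connected non-complete graph has a vertex with two non-adjacent neighbors (equivalently, a pair of vertices at distance exactly $2$). If every connected component of $H$ is complete, then by the same reasoning as above every vertex of $\bigcup_iV_i$ is extreme in $G\odot H$, so $g(G\odot H)\ge n_1n_2$ and the upper bound is attained. Conversely, if some component of $H$ is non-complete, choose $z\in V_1$ with two non-adjacent neighbors $x,y\in V_1$; then $S=\bigl(\bigcup_iV_i\bigr)\setminus\{z\}$ is a geodetic set, since $z$ lies on an $x-y$ geodesic (indeed $x\sim z\sim y$ and $d(x,y)=2$), $v_1$ also lies on an $x-y$ geodesic, and the vertices $v_i$ with $i\ge2$ are geodominated as in the upper-bound argument; hence $g(G\odot H)\le n_1n_2-1<n_1n_2$, which completes the equivalence. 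Finally, if no component of $H$ is complete, then every component of every copy $H_i$ is non-complete, so one may pick in each $H_i$ a vertex $z_i$ having two non-adjacent neighbors in $H_i$, and the same verification shows that $S=\bigcup_{i=1}^{n_1}(V_i\setminus\{z_i\})$ is a geodetic set of $G\odot H$; therefore $g(G\odot H)\le n_1(n_2-1)$. I expect no conceptual obstacle; the only non-routine ingredient is the distance-$2$ observation, and the real work is the bookkeeping in these constructions — checking that every $v_i$, every deleted $z_i$, and the degenerate cases ($n_1=1$; $n_2$ small, where $V_i\setminus\{z_i\}\ne\emptyset$ holds because a non-complete component has at least three vertices) are all handled.
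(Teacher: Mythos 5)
Your proof is correct and follows essentially the same route as the paper: $\bigcup_iV_i$ for the upper bound, Lemma~\ref{lemma-principal-geo}(iv) with Lemma~\ref{geo-H-K1+H} for the lower bound when $H$ is non-complete, extreme vertices plus Lemma~\ref{lemmaExtreme} when the components of $H$ are complete, and deletion of internal vertices of length-two geodesics for the refinement. The only (harmless) differences are that you spell out the extreme-vertex argument for a disjoint union of cliques rather than just for $H\cong K_{n_2}$, and you separate the strictness argument (deleting one vertex) from the $n_1(n_2-1)$ bound (deleting one vertex per copy), whereas the paper derives both from the same construction.
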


\begin{proof}
If $H\cong K_{n_2}$,   the vertices of the set $\cup_{i=1}^{n_1} V_i$ are extreme vertices. Then, by Lemma \ref{lemmaExtreme} we have $g(G\odot K_{n_2})\ge n_1n_2=n_1g(K_{n_2}).$  For non-complete graphs the lower bound follows directly from Lemma \ref{lemma-principal-geo}  (iv)
and Lemma \ref{geo-H-K1+H}. On the other hand, if $n_1\ge 2$, then every vertex
$v_i\in V$ is geodominated, in $G\odot H$, by two vertices belonging
to different copies of $H$. So, the set $\bigcup_{i=1}^{n_1}V_i$ is
a geodetic set of $G\odot H$. Thus, $g(G\odot H)\le n_1n_2$. Finally, if $n_1=1$, then the order of $G\odot H$ is $n_2+1$. Hence, if $H$ is a non-complete graph, then Lemma \ref{geo-kn} leads to the upper bound  $g(K_1\odot H)\le n_2$.

 Now, let us suppose that
there is a component of $H$ which is not isomorphic to a complete
graph. In such a case, there are three different vertices $u_i,x_i,y_i\in V_i$
such that $u_i\in I_{H_i}[x_i,y_i]$, with $i\in \{1,...,n_1\}$. Let $V=\{v_1,...,v_{n_1}\}$,
$U_i=V_i-\{u_i\}$, with $i\in \{1,...,n_1\}$, and let
$U=\bigcup_{i=1}^{n_1}U_i$. We will show that $U$ is a geodetic set of $G\odot H$. Since for every vertex $u_i\in \overline{U}_i$ we have
that $u_i\in I_{H_i}[x_i,y_i]$, we obtain that $u_i\in I_{G\odot
H}[U]$. Also, as for every $v_i\in V$, we have that $v_i\in
I_{G\odot H}[a,b]$, for some $a\in U_i$ and $b\in U_j$, with $i\ne
j$, we obtain that $v_i\in I_{G\odot H}[U]$. Therefore, $U$ is a
geodetic set of $G\odot H$ and, as a consequence, $g(G\odot H)\le |U|=n_1(n_2-1)$. Therefore, if $g(G\odot H)=n_1n_2$, then $H$
is isomorphic to a graph in which every connected component is
isomorphic to a complete graph.
\end{proof}

\begin{theorem}\label{geodetic-corona-suma}
Let $G$ be a connected graph of order $n$ and let $H$ be a non-complete graph. Then, $$g(G\odot H)=ng(K_1\odot H).$$
\end{theorem}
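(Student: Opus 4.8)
The plan is to prove the two inequalities $g(G\odot H)\le ng(K_1\odot H)$ and $g(G\odot H)\ge ng(K_1\odot H)$ separately, exploiting the structural results of Lemma~\ref{lemma-principal-geo} to reduce everything to the single-copy situation $\langle v_i\rangle\odot H_i\cong K_1\odot H$.

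For the upper bound, I would start from a minimum geodetic set $B$ of $K_1\odot H$. Since $H$ is non-complete, part~(iii) of Lemma~\ref{lemma-principal-geo} (applied with $n=1$) tells us $B$ consists only of vertices of $H$, i.e.\ $B\subseteq V(H)$ and $|B|=g(K_1\odot H)$. Transplant a copy $B_i\subseteq V_i$ of $B$ into each of the $n$ copies $H_i$, and set $W=\bigcup_{i=1}^n B_i$. I claim $W$ is a geodetic set of $G\odot H$. Each $B_i$ geodominates, inside $\langle v_i\rangle\odot H_i$, every vertex of $V_i\cup\{v_i\}$; in particular every vertex of $V_i$ is covered, and every $v_i\in V$ is covered (here I should note that since $B$ is geodetic in $K_1\odot H$ and $B\cap\{$apex$\}=\emptyset$, the apex $v_i$ is covered by two vertices of $B_i$, as in the proof of (iii)). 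By part~(i), geodesics internal to $G\odot H$ cannot move extra vertices of $V_i$ out of reach, so $I_{G\odot H}[W]\supseteq \bigcup_i V_i\cup V=V(G\odot H)$. Hence $g(G\odot H)\le |W|=n\,g(K_1\odot H)$.

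For the lower bound, let $W$ be a minimum geodetic set of $G\odot H$. By part~(iv), each $W_i=W\cap V_i$ is a geodetic set of $\langle v_i\rangle\odot H_i\cong K_1\odot H$, so $|W_i|\ge g(K_1\odot H)$. By part~(iii), $W\cap V=\emptyset$, so the sets $W_i$ partition $W$: $|W|=\sum_{i=1}^n |W_i|\ge n\,g(K_1\odot H)$. Therefore $g(G\odot H)\ge n\,g(K_1\odot H)$, and combined with the upper bound this gives equality.

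The one point requiring care — the main obstacle, though a mild one — is the upper-bound direction when verifying that the transplanted set $W$ actually geodominates each apex vertex $v_i$: this is exactly the step that uses non-completeness of $H$ (to guarantee two non-adjacent vertices of $B_i$ with $v_i$ on a geodesic between them) and the fact, established inside the proof of Lemma~\ref{lemma-principal-geo}(iii), that a minimum geodetic set of $K_1\odot H$ avoids the apex. One must also confirm that a geodesic between a vertex of $B_i$ and a vertex of $B_j$ with $i\ne j$ does not accidentally pass through vertices of $V_i$ (or $V_j$) other than those already covered — but part~(i) of the lemma handles this cleanly, since such a geodesic meets $V_i$ only in its endpoint. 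Everything else is bookkeeping.
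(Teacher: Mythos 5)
Your proposal is correct and follows essentially the same route as the paper: the lower bound comes from Lemma~\ref{lemma-principal-geo} (ii)--(iv) applied to a minimum geodetic set of $G\odot H$, and the upper bound from transplanting a minimum geodetic set of $K_1\odot H$ (which avoids the apex by part (iii)) into each copy $H_i$. The only cosmetic difference is that you geodominate each apex $v_i$ by two non-adjacent vertices inside its own copy $B_i$, whereas the paper uses a cross-copy geodesic when $n\ge 2$ and reserves the non-completeness argument for $n=1$; both are valid.
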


\begin{proof}
Let $W$ be a minimum geodetic set of $G\odot H$. From
Lemma \ref{lemma-principal-geo} (iii) we have that $W\cap V=\emptyset$. Also, by Lemma
\ref{lemma-principal-geo} (ii) and (iv) we have that for every $i\in
\{1,...,n\}$, the set $W_i=W\cap V_i\ne \emptyset$ is a geodetic set of
$\langle v_i\rangle \odot H_i\cong K_1\odot H$.   Hence, we have
$$g(G\odot H)=|W|=\sum_{i=1}^{n}|W_i|\ge \sum_{i=1}^{n}g(\left\langle v_i\right\rangle \odot H_i)=ng(K_1\odot H).$$

On the other hand, let $U_i\subset V_i\cup \{v_i\}$ be a minimum geodetic set of
$\left\langle v_i\right\rangle \odot H_i$ and let $U=\cup_{i=1}^{n}U_i$. Notice that, by Lemma \ref{lemma-principal-geo} (iii),
$v_i\not\in U_i$.  We will show that $U$ is a geodetic set of
$G\odot H$. Let us consider a vertex $x$ of $G\odot H$. We have the
following cases.

Case 1: If $x\in (V_i\cup\{v_i\})-U_i$, then there exist $u,v\in
U_i$ such that $x\in I_{\left\langle v_i\right\rangle \odot H_i}[u,v]$.  So, $x\in I_{G\odot
H}[u,v]$.

Case 2: If $x=v_i\in V$ and $n\ge 2$, then for every vertex $v\in
U_i$ and some $u\in U_j$, $j\ne i$ we have that $x\in I_{G\odot
H}[u,v]$. Also, if $x\in V$ and $n=1$, then as $H$ is a non-complete graph, there exist two different vertices $a,b\in U=U_1$, such that
$x\in I_{G\odot H}[a,b]$.

Thus, every vertex $x$ of $G\odot H$ is geodominated by a pair of
vertices of $U$ and, as a consequence, $g(G\odot H)\le ng(K_1\odot H)$.
Therefore, we obtain that $g(G\odot H)= ng(K_1\odot H)$.
\end{proof}

The geodetic number of wheel graphs and fan graphs were studied in
\cite{pelayo-1} and \cite{canoy}.

\begin{remark}{\rm \cite{pelayo-1}}\label{geo-wheel}
If $n\ge 4$, then
$g(W_{1,n})={\left\lceil{\frac{n}{2}}\right\rceil}$.
\end{remark}

\begin{remark}{\rm \cite{pelayo-1,canoy}}\label{geo-fan}
If $n\ge 3$, then
$g(F_{1,n})={\left\lceil{\frac{n+1}{2}}\right\rceil}$.
\end{remark}

As a particular cases of Theorem \ref{geodetic-corona-suma} and by
using the above remarks we obtain the following results.

\begin{corollary}
Let $G$ be a connected graph of order $n_1$.
\begin{itemize}
\item[{\rm (i)}]If $n_2\ge 4$, then $g(G\odot
C_{n_2})=n_1g(W_{1,n_2})=n_1\left\lceil\frac{n_2}{2}\right\rceil$.
\item[{\rm (ii)}]If $n_2\ge 3$, then $g(G\odot
P_{n_2})=n_1g(F_{1,n_2})=n_1\left\lceil\frac{n_2+1}{2}\right\rceil$.
\end{itemize}
\end{corollary}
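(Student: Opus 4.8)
The Corollary follows from Theorem~\ref{geodetic-corona-suma} once we identify the relevant coronas with wheels and fans. The plan is as follows.

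First I would observe that $K_1\odot C_{n_2}$ is precisely the wheel $W_{1,n_2}$: the single vertex of $K_1$ becomes the hub, which is joined by an edge to every vertex of the single copy of $C_{n_2}$, and that copy of $C_{n_2}$ is the rim. Similarly, $K_1\odot P_{n_2}$ is precisely the fan $F_{1,n_2}$, the hub being the vertex of $K_1$ joined to each vertex of the path $P_{n_2}$. (If the paper uses a slightly different indexing convention for $W_{1,n}$ or $F_{1,n}$, one should check the orders match, but in the stated ranges they do.)

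Next, both $C_{n_2}$ with $n_2\ge 4$ and $P_{n_2}$ with $n_2\ge 3$ are non-complete graphs (a cycle on at least $4$ vertices and a path on at least $3$ vertices each contain a non-adjacent pair). Hence Theorem~\ref{geodetic-corona-suma} applies with $H=C_{n_2}$, respectively $H=P_{n_2}$, giving
\[
g(G\odot C_{n_2})=n_1\,g(K_1\odot C_{n_2})=n_1\,g(W_{1,n_2}),\qquad
g(G\odot P_{n_2})=n_1\,g(K_1\odot P_{n_2})=n_1\,g(F_{1,n_2}).
\]
Finally, substituting the known values $g(W_{1,n_2})=\lceil n_2/2\rceil$ from Remark~\ref{geo-wheel} (valid since $n_2\ge 4$) and $g(F_{1,n_2})=\lceil (n_2+1)/2\rceil$ from Remark~\ref{geo-fan} (valid since $n_2\ge 3$) yields the two claimed formulas.

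There is essentially no obstacle here; the only point requiring a moment's care is verifying the graph isomorphisms $K_1\odot C_{n_2}\cong W_{1,n_2}$ and $K_1\odot P_{n_2}\cong F_{1,n_2}$ and checking that the hypotheses ``$H$ non-complete'' and the parameter ranges of the two remarks line up, so that Theorem~\ref{geodetic-corona-suma} and Remarks~\ref{geo-wheel} and~\ref{geo-fan} can all be invoked simultaneously.
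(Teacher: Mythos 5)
Your proof is correct and is exactly the argument the paper intends: identify $K_1\odot C_{n_2}\cong W_{1,n_2}$ and $K_1\odot P_{n_2}\cong F_{1,n_2}$, apply Theorem~\ref{geodetic-corona-suma} (both graphs are non-complete in the stated ranges), and substitute the values from Remarks~\ref{geo-wheel} and~\ref{geo-fan}. No differences worth noting.
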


>From Lemma \ref{geo-H-K1+H} we have that $g(K_1\odot H)\ge g(H)$. Hence,
Theorem \ref{geodetic-corona-suma} leads to the lower bound of Proposition
\ref{general-bound-geo}. Now we are interested in those graphs in which
$g(H)=g(K_1\odot H)$.

\begin{theorem}\label{geodetic-iff-2-geo}
For a connected graph  $H$, the following statements are equivalent:
\begin{itemize}
\item   $g(H)=g(K_1\odot H)$.
\item $g(H)=g_{_2}(H)$.
\end{itemize}
\end{theorem}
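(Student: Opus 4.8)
The plan is to do everything inside $G':=K_1\odot H$, writing $v$ for the apex vertex (the single vertex of the $K_1$ factor), so that $v$ is adjacent to every vertex of $H$ and to no other vertex. Two elementary facts drive the argument: (a) since $v$ dominates $G'$, we have $d_{G'}(x,y)\le 2$ for all $x,y$, so every geodesic of $G'$ has length $1$ or $2$; and (b) $G'$ introduces no edge inside $V(H)$, so for $x,y\in V(H)$ we have $x\sim y$ in $G'$ iff $x\sim y$ in $H$, and a path $x$--$w$--$y$ of length $2$ with $w\in V(H)$ is a geodesic of $G'$ iff it is a geodesic of $H$. I would also assume from the outset that $H$ is non-complete: this is the case in which Lemma~\ref{lemma-principal-geo}~(iii) applies to $G'$, and it gives $g(H)<|V(H)|$ by Lemma~\ref{geo-kn}.

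For the implication $g(H)=g_2(H)\Rightarrow g(H)=g(K_1\odot H)$, I would note that $g(K_1\odot H)\ge g(H)$ by Lemma~\ref{geo-H-K1+H}, so it suffices to exhibit a geodetic set of $G'$ of size $g(H)$. The candidate is a minimum $2$-geodetic set $S$ of $H$, which has $|S|=g_2(H)=g(H)$. Since $|S|<|V(H)|$, some $w_0\in V(H)\setminus S$ exists and is $2$-geodominated in $H$ by a (necessarily non-adjacent) pair $x_0,y_0\in S$; by fact (b) the witnessing path $x_0$--$w_0$--$y_0$ is a geodesic of $G'$, and likewise every $w\in V(H)\setminus S$ gets into $I_{G'}[S]$. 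It remains to catch the apex: $x_0,y_0$ are non-adjacent in $G'$ as well, so $x_0$--$v$--$y_0$ is a geodesic of $G'$ and $v\in I_{G'}[S]$. Hence $I_{G'}[S]=V(G')$, giving $g(K_1\odot H)\le |S|=g(H)$ and, with Lemma~\ref{geo-H-K1+H}, equality.

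For the converse $g(H)=g(K_1\odot H)\Rightarrow g(H)=g_2(H)$, I would use that $g(H)\le g_2(H)$ always, so it suffices to exhibit a $2$-geodetic set of $H$ of size $g(H)$. Take a minimum geodetic set $W$ of $G'$, so $|W|=g(K_1\odot H)=g(H)$. By Lemma~\ref{lemma-principal-geo}~(iii) (with $n=1$) we have $W\cap V=\emptyset$, i.e. $v\notin W$, so $W\subseteq V(H)$. For $w\in V(H)\setminus W$, pick $x,y\in W$ with $w\in I_{G'}[x,y]$; since $w\notin\{x,y\}$, fact (a) forces $d_{G'}(x,y)=2$, so $w$ is a common neighbour of $x$ and $y$ in $G'$ and hence, all three vertices being in $V(H)$, a common neighbour of $x$ and $y$ in $H$; moreover $x\not\sim y$ in $G'$ gives $x\not\sim y$ in $H$, so $d_H(x,y)=2$ and $x$--$w$--$y$ is an $H$-geodesic. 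Thus $w$ is $2$-geodominated in $H$ by $x,y\in W$, so $W$ is a $2$-geodetic set of $H$ and $g_2(H)\le|W|=g(H)$.

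The whole argument is bookkeeping on top of facts (a) and (b); the one point that needs a genuine structural input is the step $v\notin W$ for a minimum geodetic set $W$ of $K_1\odot H$, which I would simply quote from Lemma~\ref{lemma-principal-geo}~(iii) (and which is where non-completeness of $H$ is used). The main thing to watch is not to conflate "geodesic of $H$" with "geodesic of $K_1\odot H$" and to keep the apex $v$ separate from $V(H)$ throughout --- fact (b) is exactly what makes the two notions of geodesic coincide on the pairs that matter.
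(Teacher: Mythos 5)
Your proof is correct and follows essentially the same route as the paper's: the forward direction transfers a minimum $2$-geodetic set of $H$ to a geodetic set of $K_1\odot H$ (you additionally, and rightly, verify that the apex vertex is geodominated, a detail the paper glosses over), and the converse uses Lemma~\ref{lemma-principal-geo}~(iii) to exclude the apex from a minimum geodetic set of $K_1\odot H$ and then reads off $2$-geodomination from the fact that every geodesic there has length at most two. Your explicit restriction to non-complete $H$ is in fact necessary --- for $H\cong K_n$ one has $g(H)=g_{_2}(H)=n$ while $g(K_1\odot H)=n+1$ --- and the paper's own argument tacitly relies on the same assumption.
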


\begin{proof} Let us suppose $g(H)=g_{_2}(H)$. Let $W$ be a $2$-geodetic set of minimum cardinality in $H$. Hence, for every vertex $u\in
\overline{W}$ there exist $a,b\in W$, such that $u\in I_H[a,b]$ and
$d_H(a,b)=2$. Since every geodesic of length two in $H$ is a
geodesic in $K_1\odot H$, we have that $W$ is a geodetic set of $K_1\odot H$.
As a consequence, $g(H)\ge g(K_1\odot H)$. Hence, by Lemma \ref{geo-H-K1+H}
we conclude that $g(H)=g(K_1\odot H)$.

On the other hand, let us suppose $g(H)=g(K_1\odot H)$.  Let $U$ be a
minimum geodetic set of $K_1\odot H$ and let $v$ be the
vertex of $K_1$. Since  $H$ can not be a complete graph, by Lemma \ref{lemma-principal-geo} (iii) we have
that $v\notin U$.
Now, since $K_1\odot H$ has diameter two, we have that for
every vertex $u$ of $H$ not belonging to $U$, there exist $a,b\in U$ such that  $u\in
I_{K_1\odot H}[a,b]$ and $d_{H}(a,b)=2$ (Note that if $d_{H}(a,b)>2$, then $u\not\in I_{K_1\odot H}[a,b]=\{a,b,v\}$). Hence,  $U$ is a $2$-geodetic set of $H$. Thus, $g_{_2}(H)\le
|U|=g(K_1\odot H)=g(H)$. Also, as  $g(H)\le
g_{_2}(H)$, we obtain that $g(H)=g_{_2}(H)$.
\end{proof}

\begin{theorem}\label{Thgeodetic2geodetic}
Let $G$ be a connected graph of order $n$ and let $H$ be a connected non-complete graph. Then the following statements are equivalent:
\begin{itemize}
\item $g(G\odot H)=ng(H)$.
\item $g(H)=g_{_2}(H)$.
\end{itemize}
\end{theorem}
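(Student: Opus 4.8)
The plan is to obtain this theorem as an immediate corollary of the two preceding results, Theorem~\ref{geodetic-corona-suma} and Theorem~\ref{geodetic-iff-2-geo}, so essentially no fresh combinatorial argument is required. The strategy is to route the quantity $g(G\odot H)$ through the ``base case'' $g(K_1\odot H)$ and then invoke the already-established characterization of when that base case equals $g(H)$.

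First I would apply Theorem~\ref{geodetic-corona-suma}: since $H$ is non-complete (and $G$ is connected of order $n$), it gives $g(G\odot H)=n\,g(K_1\odot H)$. Consequently $g(G\odot H)=n\,g(H)$ holds if and only if $n\,g(K_1\odot H)=n\,g(H)$, which, since $n\ge 1$, is equivalent to $g(K_1\odot H)=g(H)$. Thus the first step reduces the statement about $G\odot H$ to a statement purely about $H$ and $K_1\odot H$.

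Next I would invoke Theorem~\ref{geodetic-iff-2-geo}, which is stated for connected graphs and therefore applies here because $H$ is connected: it asserts that $g(H)=g(K_1\odot H)$ if and only if $g(H)=g_{_2}(H)$. Chaining this equivalence with the one obtained in the previous step yields $g(G\odot H)=n\,g(H)\iff g(H)=g_{_2}(H)$, which is exactly the claim.

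I do not expect any genuine obstacle; the only point that warrants care — and the one a referee would check — is that the hypotheses of the two cited theorems are satisfied. Theorem~\ref{geodetic-corona-suma} needs $H$ non-complete, which is assumed, and Theorem~\ref{geodetic-iff-2-geo} needs $H$ connected, which is also assumed; neither reduction imposes any condition on $G$ beyond connectedness. Hence the content of the theorem is entirely absorbed into the earlier results, and the write-up should be only a few lines.
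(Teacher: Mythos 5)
Your proposal is correct and follows exactly the paper's route: the paper also derives this theorem as a direct consequence of Theorem~\ref{geodetic-corona-suma} and Theorem~\ref{geodetic-iff-2-geo}, and your write-up merely spells out the (valid) cancellation of the factor $n$ and the hypothesis checks that the paper leaves implicit.
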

\begin{proof}
The result is a direct consequence of Theorem \ref{geodetic-corona-suma} and Theorem \ref{geodetic-iff-2-geo}.
\end{proof}
Since for every graph $H$ of diameter two we have $g(H)=g_{_2}(H)$, Theorem \ref{Thgeodetic2geodetic} leads to the following result.

\begin{corollary}\label{geodeticDiam2GH=n1H-Corollary}
Let $G$ be a connected graph of order $n$ and let $H$ be a
graph. If $D(H)=2$, then $$g(G\odot
H)=ng(H).$$
\end{corollary}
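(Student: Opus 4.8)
The plan is to obtain the statement as an immediate specialization of Theorem~\ref{Thgeodetic2geodetic}, so that the only genuine content is verifying that the hypotheses of that theorem hold and that the equivalent condition $g(H)=g_{_2}(H)$ is automatic under the assumption $D(H)=2$. First I would observe that a graph $H$ with $D(H)=2$ is necessarily connected (the diameter is finite) and non-complete (a complete graph has diameter $1$), so $H$ meets exactly the requirements placed on the second factor in Theorem~\ref{Thgeodetic2geodetic}.

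Next I would show that $g(H)=g_{_2}(H)$ whenever $D(H)=2$. Let $S$ be a minimum geodetic set of $H$ and take any $v\in\overline{S}$. By definition of a geodetic set, $v$ lies on some $a$–$b$ geodesic with $a,b\in S$; since $v\neq a$ and $v\neq b$, a geodesic through $v$ has length at least $2$, so $d_H(a,b)\ge 2$, and because $D(H)=2$ this forces $d_H(a,b)=2$. Hence $v$ is $2$-geodominated by the pair $a,b$, which shows that $S$ is itself a $2$-geodetic set of $H$; therefore $g_{_2}(H)\le |S|=g(H)$. Combining this with the general inequality $g(H)\le g_{_2}(H)$ recorded in the introduction yields $g(H)=g_{_2}(H)$.

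Finally, feeding the equality $g(H)=g_{_2}(H)$ into Theorem~\ref{Thgeodetic2geodetic} gives $g(G\odot H)=ng(H)$, as desired. I do not expect any real obstacle: the statement is a routine corollary, and the only mild point requiring care is the elementary remark that in a diameter-two graph every proper geodomination must be realized by a pair of vertices at distance exactly $2$, which is what makes every geodetic set automatically $2$-geodetic.
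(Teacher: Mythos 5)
Your proposal is correct and follows essentially the same route as the paper: the authors also deduce the corollary from Theorem~\ref{Thgeodetic2geodetic} via the observation that $g(H)=g_{_2}(H)$ for any graph of diameter two. You simply spell out the (easy) verification of that observation and of the connectedness/non-completeness hypotheses, which the paper leaves implicit.
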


Another consequence of Theorem \ref{geodetic-iff-2-geo} is the following result.

\begin{corollary}
Let $G$ and $H$ be two connected graphs of order $n_1$ and  $n_2$, respectively. Let $N_k$ be the empty graph of order $k\ge 2$. Then
$$g(G\odot (H\odot N_k))=n_1n_2k.$$
\end{corollary}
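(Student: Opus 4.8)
The plan is to compute $g(G\odot(H\odot N_k))$ by applying Theorem~\ref{geodetic-corona-suma} twice, so the whole argument reduces to identifying the geodetic number of $K_1\odot(H\odot N_k)$. First I would observe that $H\odot N_k$ is a non-complete graph whenever $k\ge 2$ (each vertex of a copy of $N_k$ has two non-adjacent neighbors, so it is certainly not complete, and in fact it is connected since $H$ is connected). Hence Theorem~\ref{geodetic-corona-suma} applies with the outer factor $G$ and inner factor $H\odot N_k$, giving $g(G\odot(H\odot N_k))=n_1\,g(K_1\odot(H\odot N_k))$.

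Next I would use Theorem~\ref{geodetic-iff-2-geo}, which tells us that $g(K_1\odot(H\odot N_k))=g(H\odot N_k)$ exactly when $g(H\odot N_k)=g_{2}(H\odot N_k)$. So the key step is to show that $H\odot N_k$ has the property that its geodetic number equals its $2$-geodetic number. Here I would argue that $H\odot N_k$ has a minimum geodetic set realizable with geodesics of length exactly two: by Lemma~\ref{lemma-principal-geo} applied to the corona $H\odot N_k$, a minimum geodetic set $W$ consists only of vertices from the $n_2$ copies of $N_k$, with $W$ intersecting each copy and each $W\cap(\text{copy of }N_k)$ being a geodetic set of $K_1\odot N_k$. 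Since $N_k$ is edgeless, $K_1\odot N_k$ is the star $K_{1,k}$, whose only geodesics are the length-two paths through the centre; thus every vertex of such a copy not in $W$ is $2$-geodominated, and moreover the central vertex $v_i$ of $H$ lies on a length-two geodesic between two leaves in its own copy of $N_k$ (both in $W$). Therefore $W$ is a $2$-geodetic set of $H\odot N_k$, which gives $g_2(H\odot N_k)\le |W| = g(H\odot N_k)$, and the reverse inequality $g\le g_2$ is automatic.

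Combining these, $g(K_1\odot(H\odot N_k))=g(H\odot N_k)$, and then another application of Theorem~\ref{geodetic-corona-suma} (now with inner factor $N_k$, which is non-complete for $k\ge 2$) gives $g(H\odot N_k)=n_2\,g(K_1\odot N_k)=n_2\,g(K_{1,k})$. Finally $g(K_{1,k})=k$ because the $k$ leaves are extreme vertices (so they must all be in any geodetic set by Lemma~\ref{lemmaExtreme}), and the $k$ leaves already geodominate the centre. Substituting back yields $g(G\odot(H\odot N_k))=n_1\cdot n_2\cdot k$.

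I expect the main obstacle to be verifying cleanly that $H\odot N_k$ satisfies $g=g_2$ — i.e.\ making precise the claim that a minimum geodetic set of $H\odot N_k$ can be chosen to $2$-geodominate everything. The subtlety is that one must confirm that the structure forced by Lemma~\ref{lemma-principal-geo}(iii),(iv) on a minimum geodetic set of $H\odot N_k$ genuinely produces length-two geodesics for every non-selected vertex, including the vertices $v_i$ of $H$; this hinges on the fact that the copies of $N_k$ are edgeless, so that within each $K_1\odot N_k\cong K_{1,k}$ there are no geodesics of length greater than two and the centre is geodominated by any two distinct leaves. Once that is pinned down, the rest is a mechanical double application of Theorems~\ref{geodetic-corona-suma} and \ref{geodetic-iff-2-geo}.
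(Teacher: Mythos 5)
Your proposal is correct and follows essentially the same route as the paper: both proofs reduce the statement to the identity $g(H\odot N_k)=g_{2}(H\odot N_k)=n_2k$ and then invoke the corona formula of Theorem \ref{geodetic-corona-suma} together with Theorem \ref{geodetic-iff-2-geo}. The only difference is in how that identity is verified: the paper does it directly by observing that the $n_2k$ pendant vertices are extreme (giving the lower bound via Lemma \ref{lemmaExtreme}) and form a $2$-geodetic set, whereas you reach the same conclusion slightly more indirectly through Lemma \ref{lemma-principal-geo} and a second application of Theorem \ref{geodetic-corona-suma} to $H\odot N_k$.
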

\begin{proof}
The result follows from the fact that $g(H\odot N_k)=g_{_2}(H\odot N_k)=n_2k$. That is, the set composed by the $n_2k$ pendant vertices of $H\odot N_k$ form a geodetic set of $H\odot N_k$  which is a 2-geodetic set. So, $g(H\odot N_k)\le g_{_2}(H\odot N_k)\le n_2k$. Moreover, since every pendant vertex is an extreme vertex, by Lemma \ref{lemmaExtreme} we have $g(H\odot N_k)\ge n_2k$. Therefore, the result follows.
\end{proof}

The following result improves the lower bound in Proposition \ref{general-bound-geo} for those graphs whose geodetic number is different from its 2-geodetic number.

\begin{theorem}
Let $G$ be a connected graph of order $n$ and let $H$ be a non-complete graph.
If $g(H)\ne g_{_2}(H)$, then $$g(G\odot H)\ge n\left( g(H)- 1\right).$$
\end{theorem}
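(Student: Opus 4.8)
The plan is to reduce the assertion to the single-copy graph $K_1\odot H$ and then to gain one vertex from the hypothesis $g(H)\ne g_{_2}(H)$. First, because $H$ is non-complete, Theorem~\ref{geodetic-corona-suma} applies and gives the exact value $g(G\odot H)=n\,g(K_1\odot H)$. Hence it suffices to bound $g(K_1\odot H)$ from below, and in fact I would aim for the sharper inequality $g(K_1\odot H)\ge g(H)+1$: this is the genuine improvement over Proposition~\ref{general-bound-geo} (note that $n(g(H)-1)<n\,g(H)$, the bound already established there, so the inequality one really wants is $g(G\odot H)\ge n(g(H)+1)$, which of course implies the stated one).

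Second, I would combine two facts about $g(K_1\odot H)$. Lemma~\ref{geo-H-K1+H} gives $g(K_1\odot H)\ge g(H)$ with no hypothesis on $H$, and Theorem~\ref{geodetic-iff-2-geo} says that this inequality is an equality precisely when $g(H)=g_{_2}(H)$. Under our hypothesis $g(H)\ne g_{_2}(H)$ the inequality is therefore strict, i.e. $g(K_1\odot H)\ge g(H)+1$. Plugging this into the formula of the first step yields $g(G\odot H)=n\,g(K_1\odot H)\ge n\left(g(H)+1\right)\ge n\left(g(H)-1\right)$, completing the proof.

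The one point that requires care — and the step I expect to be the main obstacle — is that Theorem~\ref{geodetic-iff-2-geo} is stated for a \emph{connected} graph $H$, whereas here $H$ is only assumed non-complete. I would dispatch this by re-reading the implication of that theorem that $g(K_1\odot H)=g(H)$ forces $g(H)=g_{_2}(H)$: the proof of that implication uses only that $K_1\odot H$ has diameter two (true for every non-complete $H$) together with Lemma~\ref{lemma-principal-geo}(iii) (which likewise needs only that $H$ is non-complete), so it carries over unchanged and shows that a minimum geodetic set $U$ of $K_1\odot H$ is a $2$-geodetic set of $H$; consequently $g_{_2}(H)\le g(K_1\odot H)$, and if $g(K_1\odot H)=g(H)$ then $g_{_2}(H)\le g(H)\le g_{_2}(H)$, contradicting $g(H)\ne g_{_2}(H)$. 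With this caveat settled, everything else is the direct chain of (in)equalities above, so no further difficulty should arise.
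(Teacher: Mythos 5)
Your proof is correct and follows essentially the same route as the paper's: reduce to $K_1\odot H$ via Theorem \ref{geodetic-corona-suma} and then bound $g(K_1\odot H)$ from below by combining Lemma \ref{geo-H-K1+H} with Theorem \ref{geodetic-iff-2-geo}. In fact you obtain the sharper intermediate inequality $g(K_1\odot H)\ge g(H)+1$, whereas the paper only records $g(K_1\odot H)\ge g(H)-1$, so your argument actually yields $g(G\odot H)\ge n\left(g(H)+1\right)$, strictly improving the stated bound; your remark about the connectedness hypothesis in Theorem \ref{geodetic-iff-2-geo} is also well taken, and you resolve it correctly.
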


\begin{proof}
As a direct consequence of Theorem \ref{geodetic-iff-2-geo} and Lemma \ref{geo-H-K1+H} we obtain that, if $g(H)\ne g_{_2}(H)$, then
\begin{equation}\label{eqIntermedia}
g(K_1\odot H)\ge  g(H)- 1.
\end{equation} Hence, the result follows directly by Theorem \ref{geodetic-corona-suma} and (\ref{eqIntermedia}).
\end{proof}

\section{Steiner number of corona product graphs}

In this section the main tool will be the following basic lemma.

\begin{lemma}\label{lema-steiner-new}
Let $G=(V,E)$ be a connected graph of order $n_1$ and let $H$ be a graph
of order $n_2$. Let $H_1=(V_1,E_1),H_2=(V_2,E_2),...,H_{n}=(V_{n},E_{n})$ be the
$n_1$ copies of $H$ in $G\odot H$.
\begin{enumerate}[{\rm (i)}]
\item If $n_1\ge 2$ and $A\subseteq \cup_{i=1}^{n_1}V_i$ with $A\cap V_i\ne \emptyset$, for every $i\in \{1,...,n_1\}$, then every Steiner $A$-tree contains all vertices of $G$
\item If $U$ is a Steiner set of $G\odot H$, then $U\cap V_i\ne \emptyset$, for every $i\in \{1,...,n\}$.
\item If $n_1\ge 2$ or $n_2\ge 2$, then for every Steiner set $U$ of minimum cardinality in $G\odot H$ it follows $U\cap V=\emptyset$.
\end{enumerate}
\end{lemma}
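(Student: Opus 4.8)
The plan is to prove the three items in the order stated, since (iii) will use both (i) and (ii). Everything rests on one structural fact: in $G\odot H$ a vertex of $V_i$ has exactly one neighbour outside $V_i$, namely $v_i$, so $v_i$ separates $V_i$ from the rest of the graph. For (i), let $T$ be a Steiner $A$-tree and, for each $i$, fix a vertex $a_i\in A\cap V_i\subseteq V(T)$. Since $n_1\ge 2$ there is an index $j\ne i$, and the unique $a_i$--$a_j$ path inside the tree $T$ must leave $V_i$, hence passes through $v_i$; therefore $v_i\in V(T)$. As $i$ was arbitrary, $V\subseteq V(T)$.

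For (ii), I argue from the minimality inherent in a Steiner tree. Suppose $U\cap V_i=\emptyset$ and, for contradiction, that some $w\in V_i$ lies on a Steiner $U$-tree $T$. Since $U\neq\emptyset$ and $U\cap V_i=\emptyset$, some vertex of $U$ lies outside $V_i$, so the path in $T$ joining it to $w$ forces $v_i\in V(T)$. The vertices of $V_i\cap V(T)$ then induce in $T$ a union of subtrees, each attached to the remainder of $T$ only through the single vertex $v_i$. Deleting all vertices of $V_i\cap V(T)$, together with the edges joining them to $v_i$, yields a strictly smaller connected subgraph that still contains $U$ --- contradicting the minimality of $T$. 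Hence no vertex of $V_i$ lies on any Steiner $U$-tree, so $S_{G\odot H}[U]\subsetneq V(G\odot H)$ and $U$ is not a Steiner set.

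For (iii), first suppose $n_1\ge 2$ and let $U$ be any Steiner set; set $A:=U\setminus V$. By (ii), $\emptyset\ne U\cap V_i=A\cap V_i$ for every $i$, so (i) applies: every Steiner $A$-tree contains $V$, hence contains $A\cup(U\cap V)=U$. Since also $A\subseteq U$ gives $d(A)\le d(U)$, while a Steiner $A$-tree containing $U$ has at least $d(U)$ edges, we get $d(A)=d(U)$, and hence the Steiner $A$-trees are precisely the Steiner $U$-trees; therefore $S_{G\odot H}[A]=S_{G\odot H}[U]=V(G\odot H)$, i.e.\ $A=U\setminus V$ is itself a Steiner set. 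Consequently a minimum Steiner set $U$ satisfies $U=U\setminus V$, that is $U\cap V=\emptyset$. When $n_1=1$ the argument must change, since (i) is false there: now $v_1$ is universal, so if $v_1\in U$ then $\langle U\rangle$ is connected, $d(U)=|U|-1$, every Steiner $U$-tree has exactly $|U|$ vertices and hence vertex set $U$, so $S_{G\odot H}[U]=U$; a Steiner set with this property must be all of $V(G\odot H)$, giving $s(G\odot H)=n_2+1$, which by the Steiner-number analogue of Lemma~\ref{geo-kn} forces $G\odot H\cong K_{n_2+1}$. Since $G\odot H$ is not complete, this contradicts minimality, so again $U\cap V=\emptyset$.

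The step I expect to be the real obstacle is the reduction from $U$ to $U\setminus V$ in (iii): it is essentially the content of the lemma, and it works only because (i) guarantees that every Steiner $A$-tree already contains all of $V$ --- which is exactly what lets one identify the two families of Steiner trees and, crucially, conclude $d(A)=d(U)$ rather than merely $d(A)\le d(U)$. A secondary point to watch is that (i), and hence this reduction, genuinely fails for $n_1=1$, so that case needs the separate universal-vertex argument, in which (via the Steiner analogue of Lemma~\ref{geo-kn}) one uses that a corona with $H$ non-complete is itself non-complete; this mirrors Lemma~\ref{lemma-principal-geo}(iii).
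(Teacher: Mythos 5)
Your proof is correct and rests on the same structural fact as the paper's (the vertex $v_i$ is a cut vertex separating $V_i$ from the rest of $G\odot H$), but it is substantially more careful at the one place where the paper is genuinely sketchy. For (iii) with $n_1\ge 2$ the paper also passes from a Steiner set $U'$ to $U=U'-V$, but it merely asserts that each $v\in V_i$ still lies on some Steiner $U$-tree; your observation that every Steiner $A$-tree already contains all of $U$, whence $d(A)=d(U)$ and the two families of Steiner trees coincide, is exactly the missing justification, and it is the right way to see that $S[A]=S[U]$ rather than just $S[A]\subseteq S[U]$ or the reverse. Your proofs of (i) and (ii) are likewise expanded versions of the paper's one-line arguments (for (ii) the paper simply invokes the separation fact; your deletion argument showing that no Steiner $U$-tree can even enter a copy $V_i$ disjoint from $U$ is a valid and slightly stronger formulation).

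One point deserves flagging. Your separate treatment of $n_1=1$ is needed (the paper's proof of (iii) silently handles only $n_1\ge 2$), and your universal-vertex argument is correct, but it uses that $K_1\odot H$ is not complete, i.e.\ that $H$ is not complete --- a hypothesis that does not appear in the statement of (iii). This is not a defect of your argument so much as of the lemma as stated: for $n_1=1$ and $H\cong K_{n_2}$ with $n_2\ge 2$ we get $K_1\odot K_{n_2}\cong K_{n_2+1}$, whose unique minimum Steiner set is the whole vertex set (Lemma \ref{steiner-kn}) and hence meets $V$. So part (iii) is false as written in that case; it should read ``$n_1\ge 2$, or $n_1=1$ and $H$ non-complete,'' mirroring Lemma \ref{lemma-principal-geo}(iii). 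Every later application in the paper (Lemma \ref{lemma+K1Steiner}, Theorem \ref{steinerK1+H-iff-diam2}) does assume $H$ non-complete when $n_1=1$, so your proof covers all the cases actually used.
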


\begin{proof}
(i) follows from the fact that if there exists a Steiner $A$-tree $T$ not containing a vertex of $G$, then $T$ is not connected, which is a contradiction. (ii) follows directly from the fact that the vertices belonging to $V_i$ are adjacent to only one vertex not in $V_i$.

Now let $U'$ be a Steiner set of $G\odot H$ and let $U=U'-V$. We will show that $U$ is a Steiner set for
$G\odot H$. By (ii) we have that $U\cap V_i\ne \emptyset$, for every $i\in \{1,...,n\}$. Also, if $v\in V_i$, then we have that there exists a
Steiner $U$-tree in $G\odot H$ such that it contains the vertex $v$.  Now, since $n_1\ge 2$ we obtain that
every vertex $v_i\in V$ belongs to every Steiner $U$-tree (note that every shortest $u-v$ path, where $v\in V_i$ and  $u\in V_j$, $j\ne i$, must contain $v_i$). Thus, $U$ is a Steiner set for
$G\odot H$ and (iii) follows.
\end{proof}

The next lemmas obtained in \cite{chartrand-zhang} will be useful to obtain
our results.

\begin{lemma}{\em \cite{chartrand-zhang}}\label{steiner-kn}
Let $G$ be a connected graph of order $n$. Then $s(G)=n$ if and only
if $G\cong K_n$.
\end{lemma}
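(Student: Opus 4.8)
The plan is to establish the two implications separately; the forward direction $G\cong K_n\Rightarrow s(G)=n$ is routine, while the converse is the one requiring an idea, so I will prove its contrapositive: if $G$ is connected and $G\not\cong K_n$, then $s(G)\le n-1$.

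For the easy direction, suppose $G\cong K_n$ and let $W\subseteq V$ with $|W|=k$. Since $\langle W\rangle$ is complete, hence connected, any connected subgraph of $G$ containing $W$ has at least $k$ vertices and so at least $k-1$ edges, and a spanning tree of $\langle W\rangle$ attains this bound while using only vertices of $W$. Thus $d(W)=k-1$ and every Steiner $W$-tree has vertex set exactly $W$, so $S_G[W]=W$. Hence the only Steiner set of $K_n$ is $V$, and $s(K_n)=n$.

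For the converse, the first and crucial step is to choose the right vertex set to delete. Deleting a single vertex is not enough in general: $V\setminus\{v\}$ is a Steiner set only when $v$ is a cut vertex, since otherwise $\langle V\setminus\{v\}\rangle$ is connected, $d(V\setminus\{v\})=n-2$, and no Steiner $(V\setminus\{v\})$-tree contains $v$. Instead I delete an \emph{inclusion-minimal vertex separator} $T$: a set with $G-T$ disconnected but $G-T'$ connected for every $T'\subsetneq T$. Such a $T$ exists because $G$ is connected and not complete, so for any non-adjacent pair $u,v$ the set $V\setminus\{u,v\}$ separates $G$, and it may be shrunk to an inclusion-minimal one. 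Write $D_1,\dots,D_p$, $p\ge 2$, for the components of $G-T$. The key structural fact is that \emph{every} $t\in T$ is adjacent to \emph{every} $D_i$: by minimality $T\setminus\{t\}$ does not separate $G$, so $G-(T\setminus\{t\})=(G-T)+t$ is connected, and since the $D_i$ are pairwise non-adjacent, the single vertex $t$ can join all of them only if it has a neighbour in each. Now set $W=V\setminus T$, so $\langle W\rangle$ has the $p\ge 2$ components $D_1,\dots,D_p$. Any connected subgraph of $G$ containing $W$ needs at least $|W|+1$ vertices, hence at least $|W|$ edges; and for each fixed $t\in T$ the subgraph consisting of spanning trees of $D_1,\dots,D_p$ together with $t$ and one edge from $t$ into each $D_i$ is connected, has $|W|+1$ vertices and exactly $|W|$ edges, hence is a Steiner $W$-tree. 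Letting $t$ range over $T$ gives $S_G[W]\supseteq W\cup T=V$, so $W$ is a Steiner set and $s(G)\le |W|=n-|T|\le n-1$, as required.

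The only delicate point is the selection of $T$ in the last paragraph: one must avoid both deleting a single vertex and deleting a \emph{minimum-cardinality} separator, since with a minimum-cardinality separator a deleted vertex need not lie on any minimum Steiner tree of the remainder. Inclusion-minimality is precisely what forces each deleted vertex to be adjacent to every remaining component, which is exactly the property that makes $S_G[W]$ sweep up all of $T$. Everything else is elementary counting of vertices and edges in trees.
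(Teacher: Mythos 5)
Your proof is correct. Note that the paper offers no proof of this lemma at all---it is simply quoted from Chartrand and Zhang---so there is nothing in the text to compare against; your argument is a clean, self-contained derivation. The forward direction is the standard edge count showing $S_{K_n}[W]=W$ for every $W$, and in the converse the key structural fact---that every vertex of an inclusion-minimal separator $T$ has a neighbour in each component of $G-T$---correctly yields $d(V\setminus T)=|V\setminus T|$, a Steiner tree through each $t\in T$, and hence $S_G[V\setminus T]=V$, giving $s(G)\le n-1$. The only blemish is in your closing commentary: a minimum-cardinality separator is automatically inclusion-minimal (if a proper subset still separated, the original would not have minimum cardinality), so your construction would in fact also go through with one; the genuine obstruction you need to avoid is only the deletion of a single non-cut vertex.
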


Before present our main results about the Steiner number, let us show the following useful lemma.

\begin{lemma}\label{lemma+K1Steiner}
For any graph $G$, $s(K_1\odot G)\ge s(G)$.
\end{lemma}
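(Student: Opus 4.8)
The plan is to take a minimum Steiner set $U$ of $K_1\odot G$ and show that it is already a Steiner set of $G$, which immediately yields $s(G)\le |U|=s(K_1\odot G)$. First dispose of the trivial case $G\cong K_1$: then $K_1\odot G\cong K_2$, so Lemma \ref{steiner-kn} gives $s(K_1\odot G)=2\ge 1=s(G)$. So assume $G$ has order $n_2\ge 2$, let $v$ denote the vertex of the $K_1$ factor (hence $v$ is adjacent to every vertex of the copy of $G$), and let $U$ be a Steiner set of $K_1\odot G$ of minimum cardinality. By Lemma \ref{lema-steiner-new}(iii), applied with $n_1=1$ and $n_2\ge 2$, we have $v\notin U$, so $U\subseteq V(G)$; moreover $|U|\ge 2$, since a one-element set is never a Steiner set of a graph of order at least two.

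The key step is a dichotomy on the Steiner $U$-trees of $K_1\odot G$. Since $v$ is adjacent to all of $V(G)\supseteq U$, the star centered at $v$ with leaf set $U$ is a connected subgraph of $K_1\odot G$ containing $U$ with $|U|$ edges, so $d_{K_1\odot G}(U)\le |U|$. Let $T$ be any Steiner $U$-tree of $K_1\odot G$. If $v\notin V(T)$, then every edge of $T$ joins two vertices of $G$, so $T$ is a connected subgraph of $G$ containing $U$; since conversely any connected subgraph of $G$ containing $U$ is one of $K_1\odot G$ too, we get $d_G(U)\ge d_{K_1\odot G}(U)=|E(T)|\ge d_G(U)$, forcing equality, so $T$ is in fact a Steiner $U$-tree of $G$. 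If instead $v\in V(T)$, then since the leaves of a Steiner tree always lie in its terminal set and $v\notin U$, $v$ is not a leaf of $T$; hence $|V(T)|=|E(T)|+1=d_{K_1\odot G}(U)+1\le |U|+1$, while $U\cup\{v\}\subseteq V(T)$ with $v\notin U$ gives $|V(T)|\ge |U|+1$, whence $V(T)=U\cup\{v\}$.

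Now let $w$ be any vertex of $G$. Because $U$ is a Steiner set of $K_1\odot G$ we have $w\in S_{K_1\odot G}[U]$, so $w$ lies on some Steiner $U$-tree $T$ of $K_1\odot G$. If $v\notin V(T)$, then $T$ is a Steiner $U$-tree of $G$ by the previous paragraph, so $w\in S_G[U]$. If $v\in V(T)$, then $w\in V(T)=U\cup\{v\}$, and since $w\ne v$ we get $w\in U\subseteq S_G[U]$. In either case $w\in S_G[U]$, so $S_G[U]=V(G)$; thus $U$ is a Steiner set of $G$ and $s(G)\le |U|=s(K_1\odot G)$.

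The only delicate point is the case $v\in V(T)$: one must rule out a minimum Steiner tree passing through the apex $v$ while also containing some vertex of $G$ outside $U$. This is exactly where the bound $d_{K_1\odot G}(U)\le |U|$ coming from the star at $v$ combines with the standard fact that the leaves of a Steiner tree lie in its terminal set to pin down $V(T)=U\cup\{v\}$; everything else is bookkeeping. Throughout we use, consistently with the rest of the paper, that $G$ is connected, so that $s(G)$ and $d_G(U)$ are well defined.
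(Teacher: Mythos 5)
Your argument in the main case is correct and actually supplies details that the paper's own proof omits: the paper splits into $G\cong K_n$ (disposed of via Lemma \ref{steiner-kn}, since $K_1\odot K_n\cong K_{n+1}$) and $G\not\cong K_n$, where it merely asserts that the claim ``follows immediately'' from Lemma \ref{lema-steiner-new}\,(iii). Your dichotomy on whether a Steiner $U$-tree of $K_1\odot G$ passes through the apex $v$ --- either the tree lives entirely in $G$ and is then a Steiner $U$-tree of $G$ by the sandwich $d_G(U)\ge d_{K_1\odot G}(U)=|E(T)|\ge d_G(U)$, or it contains $v$ and the star bound $d_{K_1\odot G}(U)\le|U|$ forces $V(T)=U\cup\{v\}$ --- is precisely the argument needed to justify that a minimum Steiner set of $K_1\odot G$ avoiding $v$ is already a Steiner set of $G$. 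That part is sound.

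There is, however, a genuine gap: you split off only $G\cong K_1$ and then invoke Lemma \ref{lema-steiner-new}\,(iii) for every $G$ of order $n_2\ge 2$, including complete ones. For $G\cong K_{n_2}$ with $n_2\ge 2$ we have $K_1\odot G\cong K_{n_2+1}$, and by Lemma \ref{steiner-kn} its unique minimum Steiner set is the entire vertex set, which does contain $v$; so the conclusion $v\notin U$ you extract from (iii) is false there, and everything downstream ($U\subseteq V(G)$, the star bound, the case analysis) collapses. Part of the blame lies with the paper, whose statement of Lemma \ref{lema-steiner-new}\,(iii) is too broad (its proof only treats $n_1\ge 2$, and the statement is in fact false for $n_1=1$ with $H$ complete), but as written your proof contains a step that fails. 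The repair is exactly the paper's opening move: handle all complete $G$ first by $s(K_1\odot K_n)=n+1>n=s(K_n)$, and run your argument only for non-complete $G$, where $v\notin U$ does hold.
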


\begin{proof}
Let $n$ be the order of $G$. If $G\cong K_n$, then $K_1\odot G\cong K_{n+1}$, so by Lemma \ref{steiner-kn}, $s(K_1\odot G)=n+1>n=s(G).$ If $G\not\cong K_n$, then the result follows immediately from Lemma \ref{lema-steiner-new} (iii).
\end{proof}

\begin{proposition}\label{general-bound-stein}
Let $G=(V,E)$ be a connected graph of order $n_1$ and let $H$ be a graph
of order $n_2$. If $n_1\ge 2$, then $s(G\odot H)=n_1n_2.$
\end{proposition}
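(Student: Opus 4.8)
The plan is to establish the two inequalities $s(G\odot H)\le n_1n_2$ and $s(G\odot H)\ge n_1n_2$ separately, using Lemma~\ref{lema-steiner-new} as the main engine, together with the defining property of the corona that every vertex of $V_i$ is adjacent to $v_i$ and to no other vertex outside $V_i$.

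First I would prove the upper bound by exhibiting an explicit Steiner set. Take $U=\bigcup_{i=1}^{n_1}V_i$, so that $|U|=n_1n_2$ and $U\cap V_i\ne\emptyset$ for every $i$. Since $n_1\ge 2$, Lemma~\ref{lema-steiner-new}(i) tells us that every Steiner $U$-tree contains all of $V$; as every Steiner $U$-tree trivially contains $U$ itself, we get $S_{G\odot H}[U]=V(G\odot H)$. Hence $U$ is a Steiner set and $s(G\odot H)\le n_1n_2$.

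For the lower bound, let $U$ be a minimum Steiner set of $G\odot H$. By Lemma~\ref{lema-steiner-new}(iii) we have $U\cap V=\emptyset$, so $U\subseteq\bigcup_{i=1}^{n_1}V_i$, and by part (ii) each $U_i=U\cap V_i$ is nonempty; in particular Lemma~\ref{lema-steiner-new}(i) applies, so every Steiner $U$-tree contains all vertices $v_1,\dots,v_{n_1}$ of $G$. It then suffices to show $U_i=V_i$ for each $i$. Assume not, and let $w\in V_i\setminus U$ lie on some Steiner $U$-tree $T$. The key facts are that $N_{G\odot H}(w)\subseteq V_i\cup\{v_i\}$ and that $T$ contains $v_i$. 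If $w$ is a leaf of $T$, then $T-w$ is a smaller connected subgraph still containing $U$ (recall $w\notin U$), a contradiction. Otherwise, deleting $w$ splits $T$ into $\deg_T(w)\ge 2$ components; one of them, say $C_1$, contains $v_i$, and for every other component the vertex of it adjacent to $w$ is a neighbor of $w$ different from $v_i$, hence lies in $V_i$ and so is adjacent to $v_i$ in $G\odot H$. Adding one edge from $v_i$ to each such component produces a connected subgraph on $V(T)\setminus\{w\}$ with strictly fewer edges than $T$ but still containing $U$, contradicting the minimality of $T$. Therefore every $w\in V_i$ belongs to $U$, giving $|U|=n_1n_2$ and hence $s(G\odot H)\ge n_1n_2$.

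I expect the re-routing argument in the lower bound to be the delicate point: one must verify that, after removing the non-required vertex $w$, each of the resulting components other than the one holding $v_i$ is attached to $w$ through a vertex of $V_i$, so that it can be re-connected to $v_i$ by a single edge, and that the new edges are genuinely new, so that the edge count strictly drops while $U$-containment and connectivity are preserved. The upper bound, and the reductions $U\cap V=\emptyset$ and $U\cap V_i\ne\emptyset$, are then routine consequences of Lemma~\ref{lema-steiner-new}.
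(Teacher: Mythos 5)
Your proof is correct and follows essentially the same route as the paper: the upper bound via the set $\bigcup_{i=1}^{n_1}V_i$ together with Lemma~\ref{lema-steiner-new}, and the lower bound by showing that a vertex of $V_i$ outside a minimum Steiner set cannot lie on any Steiner tree. Your explicit re-routing through $v_i$ is simply a more carefully justified version of the paper's terser claim that the restriction of a Steiner $B$-tree to $V_i\cup\{v_i\}$ must be the star on $B_i\cup\{v_i\}$ with $|B_i|$ edges.
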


\begin{proof}
Let $A=\cup_{i=1}^{n_1}V_i$. By Lemma \ref{lema-steiner-new} (iii) we have that every  Steiner set of minimum cardinality is a subset of $A$. Thus, $A$ is a Steiner set of $G\odot H$ and, as a consequence, $s(G\odot H)\le n_1n_2.$

Now, let us suppose $B$ is a Steiner set of minimum cardinality in $G\odot H$. By Lemma \ref{lema-steiner-new} (iii) we have that $B$ does not contain any vertex of $G$. Now, let us suppose there exists a vertex $v_i\in V$ such that $B\cap V_i\subsetneq V_i$. Let $B_i=B\cap V_i$ and let $u\in V_i-B_i$. Since every vertex of $B_i$ is adjacent to $v_i$, and $v_i$ belongs to every Steiner $B$-tree $T$, we have that the size of the restriction of  $T$ to $V_i\cup\{v_i\}$ is $|B_i|$.  Thus, the vertex $u$ does not belong to any Steiner $B$-tree in $G\odot H$, which is a contradiction. Thus, for every $i\in \{1,...,n_1\}$ we have that $B\cap V_i=V_i$. Therefore, $s(G\odot H)\ge n_1n_2$. The proof is complete.
\end{proof}

The Steiner number of wheel graphs and fan graphs were studied in
\cite{pelayo-1} and \cite{canoy}.

\begin{remark}{\rm \cite{pelayo-1}}\label{stein-wheel}
If $n\ge 4$, then $s(W_{1,n})=n-2$.
\end{remark}

\begin{remark}{\rm \cite{pelayo-1,canoy}}\label{stein-fan}
If $n\ge 3$, then $g(F_{1,n})=n-1$.
\end{remark}

\begin{theorem}\label{steinerK1+H-iff-diam2}
Let $H$ be a connected non complete graph. Then the following statements are equivalent:
\begin{itemize}
\item $s(K_1\odot H)=s(H)$.
\item $D(H)=2$.
\end{itemize}
\end{theorem}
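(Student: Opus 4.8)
The statement is an equivalence, which I would prove by treating the two implications separately. Write $v$ for the vertex of $K_1$, so that $K_1\odot H$ is $H$ with one further vertex $v$ adjacent to every vertex of $H$; since $H$ is connected and not complete, $K_1\odot H$ is not complete and $D(K_1\odot H)=2$, and by Lemma~\ref{lema-steiner-new}~(iii) (which applies because $|V(H)|\ge 2$) every minimum Steiner set of $K_1\odot H$ is contained in $V(H)$ and misses $v$. The preliminary step I would carry out is a description of the Steiner trees of $K_1\odot H$ for a set $W\subseteq V(H)$: if $\langle W\rangle$ is connected then $d_{K_1\odot H}(W)=|W|-1$ and every Steiner $W$-tree is a spanning tree of $\langle W\rangle$, so $S_{K_1\odot H}[W]=W$; if $\langle W\rangle$ is disconnected then, since a subtree with only $|W|-1$ edges would have to be a spanning tree of $\langle W\rangle$ (impossible), while the star centred at $v$ with leaf set $W$ has $|W|$ edges, we get $d_{K_1\odot H}(W)=|W|$, so a Steiner $W$-tree has exactly $|W|+1$ vertices, namely $W$ together with one extra vertex which is either $v$ or a vertex of $V(H)\setminus W$ meeting every component of $\langle W\rangle$ (\emph{meeting} meaning having a neighbour in). In particular, $W\subseteq V(H)$ is a Steiner set of $K_1\odot H$ if and only if $\langle W\rangle$ is disconnected and every vertex of $V(H)\setminus W$ meets each component of $\langle W\rangle$.

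For the implication $D(H)=2\Rightarrow s(K_1\odot H)=s(H)$, Lemma~\ref{lemma+K1Steiner} already gives $s(K_1\odot H)\ge s(H)$, so it suffices to exhibit a Steiner set of $K_1\odot H$ of cardinality $s(H)$, the natural candidate being a minimum Steiner set $W$ of $H$ itself. Since $H$ is not complete, $W\ne V(H)$, and $\langle W\rangle$ must be disconnected, for otherwise $S_H[W]=W\ne V(H)$, contradicting that $W$ is a Steiner set. By the criterion above it then remains only to prove
\[
(\star)\qquad \text{every } u\in V(H)\setminus W \text{ meets every component of } \langle W\rangle .
\]
My plan for $(\star)$ is to use that $W$ is a minimum Steiner set of $H$ together with $D(H)=2$: given $u\in V(H)\setminus W$ and a component $C$ of $\langle W\rangle$ with $u$ having no neighbour in $C$, I would take a minimum Steiner $W$-tree $T$ through $u$ (which exists since $W$ is a Steiner set, so $u\in S_H[W]$) and, using that $u$ is at distance exactly $2$ from each vertex of $C$, reroute $T$ through a suitable common neighbour so as to reach a contradiction: either a connected subgraph containing $W$ with fewer edges than $d_H(W)$, or a Steiner set of $H$ of cardinality less than $|W|$. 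Either way $(\star)$ follows, hence $s(K_1\odot H)\le|W|=s(H)$ and equality holds.

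I expect $(\star)$ to be the main obstacle and the delicate core of the argument: it is the only place where the hypothesis $D(H)=2$ is used, and since a minimum Steiner $W$-tree of $H$ may contain several Steiner vertices, the rerouting must be performed without increasing the number of edges of the tree and without dropping any vertex of $W$. For the reverse implication I would argue contrapositively. Assuming $D(H)\ge 3$, let $W$ be a minimum Steiner set of $K_1\odot H$; by Lemma~\ref{lema-steiner-new}~(iii) and the criterion above, $W\subseteq V(H)$, $\langle W\rangle$ is disconnected with components $C_1,\dots,C_c$, and every $u\in V(H)\setminus W$ meets each $C_i$. This forces $d_H(W)=|W|$ and shows that $W$ is itself a Steiner set of $H$, so $s(H)\le|W|=s(K_1\odot H)$. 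To make the inequality strict, fix $p,q\in V(H)$ with $d_H(p,q)\ge 3$; since $p$ and $q$ are non-adjacent with no common neighbour, they cannot both lie in $V(H)\setminus W$ (a vertex outside $W$ joined to a one-vertex component containing one of them would place $p,q$ at distance at most $2$, and a similar analysis handles larger components), and from the resulting structural restriction I expect to be able to delete a vertex of $W$ and still have a Steiner set of $H$, yielding $s(H)<|W|=s(K_1\odot H)$, as required.
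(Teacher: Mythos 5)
Your preliminary characterization of the Steiner sets of $K_1\odot H$ contained in $V(H)$ is correct and is the right reduction: such a set $W$ works if and only if $\langle W\rangle$ is disconnected and every vertex of $V(H)\setminus W$ has a neighbor in each component of $\langle W\rangle$. The difficulty is that the step you yourself flag as the delicate core, $(\star)$, is not merely left unproven in your write-up; it is false, so no rerouting argument can establish it. Take $H$ to be the $6$-cycle $a\,p\,b\,q\,c\,r\,a$ with the three chords $pq$, $qr$, $rp$ added, so that $\{a,b,c\}$ is independent, $\{p,q,r\}$ is a triangle, and $a\sim p,r$; $b\sim p,q$; $c\sim q,r$. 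Then $D(H)=2$; no $2$-set is a Steiner set of $H$ (every interval $I[x,y]$ has at most $4$ of the $6$ vertices), while $W=\{a,b,c\}$ is a Steiner set: its Steiner distance in $H$ is $4$, attained by adjoining any two of $p,q,r$, and these trees together cover $V(H)$. Hence $W$ is a minimum Steiner set of $H$ with $s(H)=3$, yet $p$ has no neighbor in the component $\{c\}$ of $\langle W\rangle$, so $(\star)$ fails. Worse, your own characterization shows that no $3$-set is a Steiner set of $K_1\odot H$ here: the only independent triple is $\{a,b,c\}$, whose unique Steiner tree in $K_1\odot H$ is the star at $v$; each edge-plus-isolated-vertex triple fails the neighborhood condition; and any triple containing $v$ induces a connected subgraph and so spans only itself. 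Thus $s(K_1\odot H)\ge 4>3=s(H)$ although $D(H)=2$: the forward implication cannot be proved because it is not true, and the gap at $(\star)$ is unrepairable.

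For comparison, the paper's own argument for this direction stumbles at exactly the point your framework isolates: it takes a Steiner $B$-tree $T$ of $H$ and replaces an internal vertex $z$ by $v$, but the resulting tree still has $d_H(B)$ edges, whereas the Steiner distance of $B$ in $K_1\odot H$ drops to $|B|$ (the star at $v$); whenever $\langle B\rangle$ needs more than one connector vertex in $H$, as in the example above, the modified tree is not a Steiner $B$-tree of $K_1\odot H$ at all. Your reverse implication is also only a sketch (``I expect to be able to delete a vertex of $W$''), but the decisive obstruction is the forward one.
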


\begin{proof}
Let $B$ be a Steiner set of minimum cardinality in $H$ and let $v$ be the vertex of $K_1$. If $D(H)=2$,
then there exist three vertices of $H$ such that  $x,y\in B$ and $z\not \in B$,  $d_H(x,y)=2$ and $x,y\in N_B(z)$.
So, if we take a Steiner $B$-tree $T$ in $H$ containing the path $xzy$, then replacing the vertex $z$ of  $T$ by the vertex $v$, and replacing every edge $uz$ of T by a new edge $uv$,  we obtain a Steiner $B$-tree $T'$ in $K_1\odot H$. Hence, $B$ is a Steiner set for $K_1\odot H$. Therefore, $s(H)\ge s(K_1\odot H)$ and, by Lemma \ref{lemma+K1Steiner}, we conclude  $s(H)= s(K_1\odot H)$.

Now, let $H$ be a graph such that $s(K_1\odot H)=s(H)$. Let $W$ be a Steiner set of minimum cardinality in $K_1\odot H$ and let $v$ be the vertex of $K_1$. We first show that $W$ is a Steiner set for $H$. Note that by Lemma \ref{lema-steiner-new} (iii), $v\not\in W$. Since the star graph of center $v$ is a Steiner $W$-tree, we have that the Steiner distance of $W$ in $K_1\odot H$ is $d(W)=|W|$. If $\langle W\rangle$ is connected,
then $|W|$ is the order of $K_1\odot H$, which is a contradiction. Thus,
$\langle W\rangle$ is non connected. Let $\langle W_1\rangle$, $\langle W_2\rangle$, ...,$\langle W_k\rangle$ be the connected components of $\langle W\rangle$.  If there exists a vertex $u$ of $H$ such that $u\not \in W$ and $N_{W_i}(u)=\emptyset$, for some $i$, then the Steiner distance of $W$ in $K_1\odot H$ is $d(W)>\sum_{i=1}^k|W_i|=|W|$, which is a contradiction. So,  every vertex $u$ of $H$ not belonging to $W$ is at distance one to every connected component of $\langle W\rangle$ and, as a consequence, $W$ is a Steiner set of $H$,  which has minimum cardinality since $s(K_1\odot H)=s(H)$.
Let us show that $D(H)=2$. On the contrary, we suppose that $D(H)\ge 3$ (note that $H$ is not a complete graph).
>From the assumption $D(H)\ge 3$, we conclude that for each vertex $u$ of $H$, not belonging to $W$, there exist  $y\in W_i$ (for some $i$) such that $d(y,u)=2$. Let $x\in  W_i$ be a neighbor of both $u$ and $y$,  and let $W'=W-\{x\}$. Then we have that every Steiner $W$-tree of $H$ is a Steiner $W'$-tree of $H$ and, as a consequence,  $W'$ is a Steiner set of  $H$, which is a contradiction. Therefore, $D(H)=2$.
\end{proof}

\section{Relationships  between the geodetic number and the Steiner number}

Here we show  some classes of graphs where the Steiner number is greater than or equal to the geodetic number.

\begin{theorem}\label{geodetic-steiner-diam-2}
If $G$ is a graph of diameter two, then every Steiner set for $G$ is a geodetic set for $G$.
\end{theorem}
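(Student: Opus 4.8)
The plan is to take an arbitrary Steiner set $W$ of a graph $G$ with $D(G)=2$ and show that $I_G[W]=V$, i.e. that every vertex $v\notin W$ lies on a geodesic between two vertices of $W$. The key observation is that in a graph of diameter two, a geodesic has length at most $2$, so a vertex $v\notin W$ is geodominated by a pair $x,y\in W$ precisely when either $v$ is adjacent to some $x\in W$ with... no — more carefully, $v\in I_G[x,y]$ with $d(x,y)=2$ iff $x,y\in N(v)$ and $x\not\sim y$; and $v\in I_G[x,y]$ with $d(x,y)=1$ is impossible for $v\neq x,y$. So the goal reduces to: for every $v\in V\setminus W$, there exist two non-adjacent vertices of $W$ both adjacent to $v$. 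Equivalently, $v$ has at least two neighbors in $W$ that are themselves non-adjacent.

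First I would fix $v\notin W$ and invoke the Steiner property: $v$ lies on some Steiner $W$-tree $T$, a tree of minimum size among connected subgraphs containing $W$. The main step is to control the structure of $T$ using $D(G)=2$. Since $\operatorname{diam}(G)=2$, any two vertices of $W$ are at distance $1$ or $2$ in $G$; I expect to argue that the Steiner distance $d(W)$ is small enough that $T$ cannot "pass through" $v$ via a long detour. Concretely, I would look at the neighbors of $v$ in $T$. If $v$ is a leaf of $T$, then $v\in W$ (a leaf of a Steiner tree must be a terminal), contradicting $v\notin W$; so $v$ has degree $\ge 2$ in $T$. Let $x,y$ be two distinct neighbors of $v$ in $T$. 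The core claim is that one can choose such $x,y$ to be non-adjacent in $G$: if $x\sim y$ in $G$, then deleting $v$ from $T$ and reconnecting the two components through the edge $xy$ would (after checking it stays a tree containing $W$) give a connected subgraph containing $W$ of size $\le |E(T)|$, but one that no longer needs $v$ — and by iterating / choosing $T$ to additionally minimize something, this should force the existence of a Steiner $W$-tree avoiding $v$ unless some pair of $v$'s tree-neighbors is non-adjacent. Then, finally, I would need $x,y\in W$: this may not hold for the original $T$, so I would instead argue along the $x$–side branch of $T$: follow the path in $T$ from $v$ through $x$ until reaching a vertex of $W$; because $D(G)=2$ every such branch has length... here is where I would use diameter two most forcefully, to show the branches from $v$ in a Steiner $W$-tree have length exactly $1$, i.e. the tree-neighbors of $v$ already lie in $W$.

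The main obstacle, and the step I expect to require the most care, is precisely this last point: ruling out long branches in the Steiner $W$-tree emanating from $v$, and simultaneously ensuring the two branches end at \emph{non-adjacent} vertices of $W$. The delicate case is when $v$ has exactly two neighbors $x,y$ in $T$ with $x\sim y$ in $G$ and both branches from $v$ are short: one must then exhibit a \emph{different} Steiner $W$-tree containing $v$ with a better configuration, or directly contradict minimality of $d(W)$ by the swap $v\mapsto$ edge $xy$. I would handle this by choosing, among all Steiner $W$-trees, one in which $v$ appears and which in addition minimizes the total distance (in $G$) from $v$ to $W$ along $T$, or minimizes the degree of $v$ in $T$; a short case analysis on $\deg_T(v)$ and on adjacency of $v$'s $T$-neighbors, each time using $D(G)=2$ to bound branch lengths, should close the argument and yield the required non-adjacent pair $x,y\in N(v)\cap W$, hence $v\in I_G[x,y]\subseteq I_G[W]$. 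Since $v\notin W$ was arbitrary and $W\subseteq I_G[W]$ trivially, this gives $I_G[W]=V$, so $W$ is a geodetic set.
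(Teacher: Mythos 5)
Your reduction is the right one: in a graph of diameter two a vertex $v\notin W$ lies in $I[W]$ if and only if $v$ has two non-adjacent neighbors in $W$, and your observation that $v$ has degree at least $2$ in any Steiner $W$-tree containing it (a leaf of a Steiner tree must be a terminal) is correct. From that point on, however, the proposal is a plan rather than a proof: the two decisive assertions --- that the branches of the Steiner tree emanating from $v$ have length one, so that the tree-neighbors of $v$ already lie in $W$, and that two of them may be chosen non-adjacent --- are exactly what has to be proved, and you defer both to unspecified exchange or ``minimize something'' arguments that are never carried out.

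The first of these assertions is false, and the gap cannot be repaired, because the statement itself fails. Take $V=\{a,b,c,u,v,p,q\}$ with edge set $\{va,\,vu,\,ub,\,uc,\,up,\,uq,\,pa,\,pb,\,qa,\,qc\}$; one checks directly that this graph has diameter two. Let $W=\{a,b,c\}$. These three vertices are pairwise non-adjacent and have no common neighbor, so $d(W)=4$; the tree with edges $\{va,vu,ub,uc\}$ is a Steiner $W$-tree containing $v$, and the Steiner $W$-trees on $\{a,b,c,u,p\}$, $\{a,b,c,u,q\}$ and $\{a,b,c,p,q\}$ cover the remaining vertices, so $S[W]=V$ and $W$ is a (minimum) Steiner set. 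Yet $N_W(v)=\{a\}$, so $v$ lies on no geodesic between two vertices of $W$ and $W$ is not a geodetic set. In this example the branch of the Steiner tree from $v$ toward $b$ and $c$ passes through $u\notin W$ (length two), and no exchange argument can produce two non-adjacent $W$-neighbors of $v$, because $v$ has only one neighbor in $W$. The same configuration shows that the structural claim used in the paper's own proof --- that every non-$W$ vertex of a Steiner $W$-tree has $W$-neighbors in at least two components of $\langle W\rangle$ --- does not follow from the diameter hypothesis alone; here $v$ is a connector with $N_W(v)$ contained in a single component. So neither your route nor the paper's closes this step as stated.
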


\begin{proof}
Let $W$ be a Steiner set of minimum cardinality in $G$ and let $n$ be the order of $G$. If $\langle W\rangle$ is connected,
then $|W|=n$. So, by Lemma \ref{steiner-kn} we have that $G\cong
K_n$, which is a contradiction because $G$ has diameter two. Thus,
$\langle W\rangle$ is non connected. Let $\langle  B_1\rangle,\langle B_2\rangle,...,\langle B_r\rangle$ be the
connected components of $W$. We assume that $W$ is not a geodetic set.
Then there exists a vertex $x$ of $G$ such that $x\not\in I[W]$.
Thus, $x\notin W$ and $x\notin I[u,v]$ for every $u,v\in W$. Hence,
$N_W(x)\subseteq B_i$, for some $i\in \{1,...,r\}$. Since $G$ has
diameter two, any Steiner $W$-tree is formed by $r$ Steiner
$B_i$-trees connected by vertices $v_1,v_2,...,v_t$, $t\ge 1$, not
belonging to $W$ such that $N_W(v_i)\not\subset B_j$, for every
$i\in \{1,...,t\}$ and $j\in \{1,...,r\}$. Hence,
$S[W]=\left(\bigcup_{i=1}^rB_i\right)\cup\left(\bigcup_{i=1}^t\{v_i\}\right)$.
Therefore $x\not\in S[W]$, which is a contradiction.
\end{proof}

\begin{corollary}\label{coro-geodetic-steiner-diam-2}
If $G$ is a graph of diameter two, then $g(G)\le s(G)$.
\end{corollary}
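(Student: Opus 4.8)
The plan is to derive this as an immediate consequence of Theorem \ref{geodetic-steiner-diam-2}. Recall that $s(G)$ is the minimum cardinality among all Steiner sets of $G$, so there exists a Steiner set $W$ with $|W| = s(G)$. By Theorem \ref{geodetic-steiner-diam-2}, since $G$ has diameter two, this set $W$ is also a geodetic set of $G$. Hence $g(G)$, being the minimum cardinality among geodetic sets, satisfies $g(G) \le |W| = s(G)$.

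Concretely, I would write: let $W$ be a minimum Steiner set of $G$, so $|W| = s(G)$. Since $D(G) = 2$, Theorem \ref{geodetic-steiner-diam-2} guarantees that $W$ is a geodetic set of $G$. Therefore $g(G) \le |W| = s(G)$, which is the claim.

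There is essentially no obstacle here: the entire content of the corollary is packaged in the preceding theorem, and the only observation needed is the trivial one that if every Steiner set is geodetic, then in particular a minimum-cardinality Steiner set is geodetic, so the minimum over geodetic sets cannot exceed it. One small point worth a remark in passing is that a graph of diameter two has at least two vertices and is not complete only if it genuinely has diameter (not radius) two — but the hypothesis $D(G)=2$ already excludes the trivial case $G \cong K_n$, and Theorem \ref{geodetic-steiner-diam-2} was proved under exactly this hypothesis, so nothing extra needs checking. The proof is therefore a one-line deduction.

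\begin{proof}
Let $W$ be a Steiner set of minimum cardinality in $G$, so that $|W| = s(G)$. Since $G$ has diameter two, Theorem \ref{geodetic-steiner-diam-2} implies that $W$ is a geodetic set of $G$. Therefore $g(G) \le |W| = s(G)$.
\end{proof}
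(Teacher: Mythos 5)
Your proof is correct and is exactly the deduction the paper intends: the corollary is stated without proof as an immediate consequence of Theorem \ref{geodetic-steiner-diam-2}, and your one-line argument (a minimum Steiner set is geodetic, hence $g(G)\le s(G)$) is the standard way to fill it in.
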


Now, from Theorem \ref{geodetic-corona-suma}, Proposition
\ref{general-bound-stein} and Corollary
\ref{coro-geodetic-steiner-diam-2} we obtain the following
interesting result in which we give an infinite number of graphs $G$
satisfying that $g(G)\le s(G)$.

\begin{theorem}
Let $G$ be a connected graph of order $n_1\ge 2$ and let $H$ be any
non complete graph of order $n_2$. Then, $$g(G\odot H)\le s(G\odot H).$$
\end{theorem}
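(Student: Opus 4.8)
The plan is to combine the exact formula for $g(G\odot H)$ from Theorem~\ref{geodetic-corona-suma} with the exact formula for $s(G\odot H)$ from Proposition~\ref{general-bound-stein}, reducing the desired inequality to a statement about a single copy of $H$. Since $n_1\ge 2$ and $H$ is non complete, Theorem~\ref{geodetic-corona-suma} gives $g(G\odot H)=n_1\,g(K_1\odot H)$, and Proposition~\ref{general-bound-stein} gives $s(G\odot H)=n_1 n_2$. Dividing through by $n_1$, it suffices to prove
\begin{equation}\label{eqReduction}
g(K_1\odot H)\le n_2,
\end{equation}
i.e.\ that the geodetic number of the single-copy corona $K_1\odot H$ is at most the order $n_2$ of $H$.

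First I would recall that $K_1\odot H$ has order $n_2+1$. The key observation is that since $H$ is non complete, $K_1\odot H$ is also non complete: two non-adjacent vertices of $H$ remain non-adjacent in $K_1\odot H$. Therefore Lemma~\ref{geo-kn} (applied to the connected graph $K_1\odot H$ of order $n_2+1$) tells us that $g(K_1\odot H)\ne n_2+1$, and hence $g(K_1\odot H)\le n_2$, which is precisely~\eqref{eqReduction}. This is exactly the step already carried out inside the proof of Proposition~\ref{general-bound-geo} for the case $n_1=1$, so no new work is needed there.

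Putting the pieces together: $g(G\odot H)=n_1\,g(K_1\odot H)\le n_1 n_2 = s(G\odot H)$, which is the claimed inequality. The argument is short because all the substantive content is already packaged in the earlier results; the only thing to be careful about is checking the hypotheses line up — namely that $G$ is connected of order $n_1\ge 2$ so that both Theorem~\ref{geodetic-corona-suma} (which needs a connected $G$ and non complete $H$) and Proposition~\ref{general-bound-stein} (which needs $n_1\ge 2$) apply. The ``hard part,'' such as it is, is simply recognizing that the problem collapses to the $K_1\odot H$ case and that non-completeness of $H$ is exactly what prevents $g(K_1\odot H)$ from hitting the maximum value $n_2+1$; there is no real obstacle beyond bookkeeping.

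Note that an alternative route via Corollary~\ref{coro-geodetic-steiner-diam-2} does not work directly, since $G\odot H$ need not have diameter two; the diameter-two corollary is a parallel result rather than a tool here. So the formula-based approach above is the natural one.
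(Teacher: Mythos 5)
Your proof is correct, but the key step differs from the paper's. Both arguments begin identically, reducing the claim via Theorem~\ref{geodetic-corona-suma} and Proposition~\ref{general-bound-stein} to the single inequality $g(K_1\odot H)\le n_2$. You establish this directly from Lemma~\ref{geo-kn}: since $H$ is non complete, so is $K_1\odot H$, hence its geodetic number cannot equal its order $n_2+1$. The paper instead routes through Corollary~\ref{coro-geodetic-steiner-diam-2}: $K_1\odot H$ has diameter two, so $g(K_1\odot H)\le s(K_1\odot H)$, and then $s(K_1\odot H)\le n_2$ by Lemma~\ref{steiner-kn} (non-completeness again). Your closing remark that the diameter-two corollary ``does not work directly'' is therefore slightly off the mark: it is true that $G\odot H$ itself need not have diameter two, but the paper never applies the corollary to $G\odot H$ --- it applies it to $K_1\odot H$, which always has diameter two, and that is precisely how the authors make the argument go through. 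Your route is marginally more elementary (it needs only the trivial characterization of graphs with $g=n$), while the paper's route makes the connection to its central theme --- that Steiner sets of diameter-two graphs are geodetic --- explicit in the proof itself. Either way the hypotheses line up as you checked, and the conclusion follows.
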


\begin{proof}
By Theorem \ref{geodetic-corona-suma} we have that $g(G\odot
H)=ng(K_1\odot H)$. Since, $K_1\odot H$ has diameter two, by using Corollary
\ref{coro-geodetic-steiner-diam-2} we have that $g(K_1\odot H)\le
s(K_1\odot H)$. Finally, by Proposition \ref{general-bound-stein} we know that $s(G\odot H)=n_1n_2$. Hence,
$$g(G\odot H)=n_1g(K_1\odot H)\le n_1s(K_1\odot H)\le n_1n_2=s(G\odot H).$$
\end{proof}

\noindent{\large\bf Acknowledgments}

We would like to thank Dr. Ayyakutty Vijayan for his helpful suggestions and comments.


\begin{thebibliography}{99}

\bibitem{ChHaZh3} G. Chartrand, F. Harary, P. Zhang, Extremal problems in geodetic graph theory,\emph{ Congressus
Numerantium} \textbf{130} (1998) 157--168.

\bibitem{pelayo-1} C. Hernando, T. Jiang, M. Mora, I. M. Pelayo, C. Seara, On the Steiner, geodetic and hull numbers of
graphs, {\em Discrete Mathematics} {\bf 293} (1--3) (2005) 139--154.

\bibitem{chartrand-first} G. Chartrand, O. R. Oellermann, S. Tian, H. B. Zou, Steiner distance in graphs, {\it \v{C}asopis
pro P\v{e}stov\'an\'i Matematiky} {\bf 114} (1989) 399--410.

\bibitem{chartrand-zhang} G. Chartrand, P. Zhang, The Steiner number of a graph, \emph{Discrete Mathematics} \textbf{242} (2002)  41--54.

\bibitem{canoy} R. Eballe, S. Canoy Jr., Steiner sets in the join and composition
of graphs, {\em Congressus Numerantium} {\bf 170} (2004) 65--73.

\bibitem{HaLoTs} F. Harary, E. Loukakis, C. Tsouros, The geodetic number of a graph, \emph{Mathematical and Computer Modelling}
\textbf{17} (11) (1993) 89--95.

\bibitem{li-da-tong}L. D. Tong, Geodetic sets and Steiner sets in graphs, {\it Discrete Mathematics} {\bf 309} (2009) 4205--4207.

\bibitem{SteinerGeodetic} O. R. Oellermann, M. L. Puertas, Steiner intervals and Steiner geodetic numbers in distance-hereditary graphs,
 \emph{ Discrete Mathematics} \textbf{307} (1)  (2007) 88--96.

\bibitem{pelayo-comment} I. M. Pelayo, Comment on \lq\lq The Steiner number of a graph\rq\rq$\,$ by G. Chartrand and P. Zhang
[{\it Discrete Mathematics} {\bf 242} (2002) 41--54], \emph{Discrete
Mathematics} \textbf{280} (2004)  259--263.

\end{thebibliography}
\end{document}